\renewcommand{\thesubfigure}{\thefigure.\arabic{subfigure}}
\renewcommand{\p@subfigure}{}
\renewcommand{\@thesubfigure}{\thesubfigure:\hskip\subfiglabelskip}
\DeclareFontFamily{U}{tipa}{}
\DeclareFontShape{U}{tipa}{bx}{n}{<->tipabx10}{}
\newcommand{\arc@char}{{\usefont{U}{tipa}{bx}{n}\symbol{62}}}%
\newcommand{\arc}[1]{\mathpalette\arc@arc{#1}}
\newcommand{\arc@arc}[2]{%
  \sbox0{$\m@th#1#2$}%
  \vbox{
    \hbox{\resizebox{\wd0}{\height}{\arc@char}}
    \nointerlineskip
    \box0
  }%
}
\newcommand{\doublewedge}{\big@doubleop{\wedge}}
\newcommand{\big@doubleop}[1]{%
  \DOTSB\mathop{\mathpalette\big@doubleop@aux{#1}}\slimits@
}
\newcommand\big@doubleop@aux[2]{%
  \sbox\z@{$\m@th#1#2$}%
  \makebox[1.35\wd\z@][s]{$\m@th#1#2\hss#2$}%
}
\newcommand{\abs}[1]{\left|#1\right|}     
\newcommand{\norm}[1]{\left\|#1\right\|}  
\newcommand{\cl}{\mbox{cl}}
\newcommand{\Int}{\mbox{int}}
\newcommand{\bdy}{\mbox{bdy}}
\newcommand{\Nrv}{\mbox{Nrv}}
\newcommand{\near}{\delta} 
\newcommand{\dnear}{\delta_{\Phi}} 
\newcommand{\dcap}{\mathop{\cap}\limits_{\Phi}} 
\newcommand{\Dcap}{\mathop{\bigcap}\limits_{\Phi}} 
\newcommand{\sig}{\mbox{sig}}
\newcommand{\sh}{\mbox{sh}}
\newcommand{\cyc}{\mbox{cyc}}
\newcommand{\dfar}{{\not\delta}_{\Phi}} 
\newcommand{\sn}{\mathop{\delta}\limits^{\doublewedge}} 
\newcommand{\dcup}{\mathop{\cup}\limits_{\Phi}} 
\newcommand{\notfar}{\mathop{\not{\delta}}\limits^{\doublewedge}} 
\newtheorem{example}{Example}
\newtheorem{remark}{Remark}
\newtheorem{definition}{Definition}
\newtheorem{lemma}{Lemma}
\newtheorem{theorem}{Theorem}
\newtheorem{proposition}{Proposition}
\newtheorem{corollary}{Corollary}
\newtheorem{conjecture}{Conjecture}
\definecolor{light}{gray}{0.80}
\begin{document}

\title[Proximal Planar Shape Signatures]{Proximal Planar Shape Signatures.\\   Homology Nerves and Descriptive Proximity}

\author[James F. Peters]{James F. Peters}
\address{
Computational Intelligence Laboratory,
University of Manitoba, WPG, MB, R3T 5V6, Canada and
Department of Mathematics, Faculty of Arts and Sciences, Ad\.{i}yaman University, 02040 Ad\.{i}yaman, Turkey}
\thanks{The research has been supported by the Natural Sciences \&
Engineering Research Council of Canada (NSERC) discovery grant 185986 
and Instituto Nazionale di Alta Matematica (INdAM) Francesco Severi, Gruppo Nazionale per le Strutture Algebriche, Geometriche e Loro Applicazioni grant 9 920160 000362, n.prot U 2016/000036.}

\subjclass[2010]{54E05 (Proximity); 55R40 (Homology); 68U05 (Computational Geometry)}

\date{}

\dedicatory{Dedicated to  J.H.C. Whitehead and Som Naimpally}

\begin{abstract}
This article introduces planar shape signatures derived from homology nerves, which are intersecting 1-cycles in a collection of homology groups endowed with a proximal relator (set of nearness relations) that includes a descriptive proximity.   A 1-cycle is a closed, connected path with a zero boundary in a simplicial complex covering a finite, bounded planar shape.   The \emph{signature of a shape} $\sh A$ (denoted by $\sig(\sh A)$) is a feature vector that describes $\sh A$.   A signature $\sig(\sh A)$ is derived from the geometry, homology nerves, Betti number, and descriptive CW topology on the shape $\sh A$.   
Several main results are given, namely, (a) every finite, bounded planar shape has a signature derived from the homology group on the shape, (b) a homology group equipped with a proximal relator defines a descriptive Leader uniform topology and (c) a description of a homology nerve and union of the descriptions of the 1-cycles in the nerve have same homotopy type.
\end{abstract}
\keywords{Homology Group, Homology Nerve, Descriptive Proximity, Planar Shape, Signature}

\maketitle
\tableofcontents

\section{Introduction}
This paper introduces shape signatures restricted to the Euclidean plane.   A finite, bounded \emph{planar shape} $A$ (denoted by $\sh A$) is a finite region of the Euclidean plane bounded by a simple closed curve and with a nonempty interior~\cite{Peters2017arXiv1708-04147planarShapes}.

\setlength{\intextsep}{0pt}
\begin{wrapfigure}[11]{R}{0.35\textwidth}
\begin{minipage}{4.0 cm}
\centering
\begin{pspicture}
(-0.5,-1.5)(4,1.5)
\psline[linestyle=solid]%
(0,0)(1,1)(3,1)(4,0)(3,-1)(1,-1)(0,0)
\psline[linestyle=solid]%
(1,-1)(1,1)(1.8,0.2)(3,1)(3,-1)(1.8,0.2)(1,-1)
\psdots[dotstyle=o, linewidth=1.2pt,linecolor = black, fillcolor = yellow]%
(0,0)(1,-1)(1,1)(1.8,0.2)(3,1)(3,-1)(1.8,0.2)(4,0)
\psline[linecolor=blue,arrowsize=0pt 5]{->}(0,0)(1,1)
\psline[linecolor=blue,arrowsize=0pt 5]{->}(1,1)(3,1)
\psline[linecolor=blue,arrowsize=0pt 5]{->}(3,1)(3,-1)
\psline[linecolor=blue,arrowsize=0pt 5]{->}(3,-1)(1,-1)
\psline[linecolor=blue,arrowsize=0pt 5]{->}(1,-1)(0,0)
\rput(0.3,0.75){$\boldsymbol{e_1}$}\rput(2.0,1.2){$\boldsymbol{e_2}$}
\rput(3.2,0.0){$\boldsymbol{e_3}$}\rput(2.0,-1.2){$\boldsymbol{e_4}$}
\rput(0.3,-0.75){$\boldsymbol{e_5}$}\rput(-0.3,-0.0){$\boldsymbol{v_1}$}
\end{pspicture}
\caption[]{Path}
\label{fig:1-path}
\end{minipage}
\end{wrapfigure}  

After covering a shape with a simplicial complex, the signature of a shape is derived from the characteristics of the simple closed connected paths derived from connections between vertices in the covering.     A \emph{path} in a simplicial complex is a sequence of connected simplexes.    A \emph{closed path} is a connected path in which one can start at any vertex $v$ in the path and traverse the path to reach $v$.   A \emph{simple closed path} contains no self intersections (loops).  A pair of adjacent simplexes $\sigma_1, \sigma_2$ are \emph{connected}, provided $\sigma_1, \sigma_2$ have a common part~\cite[\S IV.1, p. 169]{Bredon1997homologyTheory}.


A path is oriented, provided the path can be traversed in either forward (positive) or reverse (negative) direction.    In other words, for any pair of adjacent edges in an oriented path, we can choose one of the edges and the direction to take in traversing the edges (\emph{cf.}, M. Berger and G. Gostiaux~\cite[\S 0.1.3]{BergerGostiaux1988orientation} and J.W. Ulrich ~\cite[\S 2, p. 364]{Ulrich1970SIAMJAMorientedGraph} on oriented graphs).

\begin{example} {\bf Sample Connected 1-simplexes in a Simple Closed Path}.\\
Let $e_1, e_2, e_3, e_4, e_5$ be a sequence of oriented path containing 1-simplexes (edges)
as shown in Fig.~\ref{fig:1-path}.     The ordering of  the 0-simplexes (vertices) is suggested by the directed edges.  For example, $e_1\rightarrow e_2\rightarrow e_3\rightarrow e_4\rightarrow e_5\rightarrow e_1$ defines a path.   This path is closed, since $e_5\rightarrow e_1$ at the end of a traversal of the edges, starting at $v_1$.   This closed path is simple, since it has no loops.
\qquad \textcolor{blue}{\Squaresteel}
\end{example}


A triangulated shape $A$ (also denoted by $\sh A$)  is connected, provided there is an edgewise simple closed path between each pair of vertices in $\sh A$.   Let $K$ be a simplicial complex covering shape $\sh A$.   A 1-chain is a formal sum of edges leading from one vertex to another vertex on $K$.    A 1-cycle is a 1-chain with an empty boundary. 
Also let $\sigma_i$ denote the $i$th edge in a path in $K$, $C_1(K)$ a set of cycles on edges on $K$ and let $C_0(K)$ be a set of cycles on vertices on $K$.     Let $\sigma$ be a simplex spanned by the vertices $v_0,\dots,v_n$ in $K$.  For $p\geq 1$, the homomorphic mapping $\partial_p: C_1(K)\longrightarrow C_{0}(K)$
is defined by
\[
\partial_1\sigma = \mathop{\sum}\limits_{i=0}^n (-1)^i\left[v_0,\dots,v_n\right] =  \mathop{\sum}\limits_{i=0}^n \sigma_i.
\]
The alternating signs on the terms indicate the simplexes are oriented, which means that for each positive term $+v_j$, there is a corresponding $-v_j, 0 \leq j\leq n$.   The signs are inserted to take path orientation into account, so that all faces of a simplex are coherently oriented~\cite[\S 2.1]{Hatcher2002CUPalgebraicTopology}. 


\setlength{\intextsep}{0pt}
\begin{wrapfigure}[10]{R}{0.43\textwidth}
\begin{minipage}{4.3 cm}
\centering
\begin{pspicture}
(-1.1,-1.5)(4,1.5)
\psline[linestyle=solid]%
(0,0)(1,1)(3,1)(4,0)(3,-1)(1,-1)(0,0)
\psline[linestyle=solid]%
(1,-1)(1,1)(1.8,0.2)(3,1)(3,-1)(1.8,0.2)(1,-1)
\pspolygon[fillstyle=solid,fillcolor=lightgray](3,1)(3,-1)(1.8,0.2)
\psdots[dotstyle=o, linewidth=1.2pt,linecolor = black, fillcolor = yellow]%
(0,0)(1,-1)(1,1)(1.8,0.2)(3,1)(3,-1)(1.8,0.2)(4,0)
\psline[linecolor=blue,arrowsize=0pt 5]{->}(0,0)(1,1)
\psline[linecolor=blue,arrowsize=0pt 5]{->}(1,1)(3,1)
\psline[linecolor=blue,arrowsize=0pt 5]{->}(3,1)(3,-1)
\psline[linecolor=blue,arrowsize=0pt 5]{->}(3,-1)(1,-1)
\psline[linecolor=blue,arrowsize=0pt 5]{->}(1,-1)(0,0)
\psline[linecolor=blue,arrowsize=0pt 5]{->}(2.98,-0.98)(1.8,0.2) 
\psline[linecolor=blue,arrowsize=0pt 5]{<-}(1.81,0.21)(2.98,0.98) 
\rput(0.3,0.75){$\boldsymbol{e_1}$}\rput(2.0,1.2){$\boldsymbol{e_2}$}\rput(2.2,-0.5){$\boldsymbol{e_6}$}
\rput(3.2,0.0){$\boldsymbol{e_3}$}\rput(2.0,-1.2){$\boldsymbol{e_4}$}\rput(2.2,0.7){$\boldsymbol{e_7}$}
\rput(0.3,-0.75){$\boldsymbol{e_5}$}\rput(2.5,0.1){\textcolor{black}{\large $\boldsymbol{H_o}$}}
\rput(3,1.25){\textcolor{black}{\large $\boldsymbol{v_3}$}}\rput(1,1.25){\textcolor{black}{\large $\boldsymbol{v_2}$}}
\rput(-0.25,0){\textcolor{black}{\large $\boldsymbol{v_1}$}}\rput(3,-1.25){\textcolor{black}{\large $\boldsymbol{v_4}$}}
\rput(1,-1.25){\textcolor{black}{\large $\boldsymbol{v_5}$}}\rput(1.4,0.2){\textcolor{black}{\large $\boldsymbol{v_6}$}}
\end{pspicture}
\caption{\bf 1-cycle}
\label{fig:chainMap}
\end{minipage}
\end{wrapfigure} 

The maps $\partial_n$ are called \emph{chain maps} (or \emph{simplicial boundary maps}).   Each chain map $\partial_n$ is a \emph{homomorphism}.   The sum of the connected, oriented paths is called a \emph{chain}.   For a path with $n$ edges in a triangulated planar shape,  $\partial_n$ defines a 1-chain.   The vertices on a 1-simplex (edge) $\sigma_i$ are the boundaries on $\sigma_i$.  In other words, the boundary of $n$ vertices $\left[v_0,\dots,v_n\right]$ is the $(n-1)$-chain formed by the sum of the faces~\cite[\S 2.1]{Hatcher2002CUPalgebraicTopology}.   For a 1-chain $c = \sum\lambda_i\sigma_i, \lambda_i\in \mathbb{Z}\mbox{mod}~2$ ({\em i.e.}, for an integer coefficient $\lambda_i$ in a 1-chain summand,  $\lambda_i$ mod~2 = 0 or 1),  the \emph{boundary} of the 1-chain is the sum of the boundaries of its 1-simplexes, namely, 
\[
\partial c = \lambda_1\partial\sigma_1 + \cdots + \lambda_n\partial\sigma_n= \mathop{\sum}\limits_{i=1}^n\lambda_i\partial\sigma_i.   
\]

Let $K$ be a simplicial complex and let $C_2(K), C_1(K), C_0(K)$ be an additive Abelian group of 2-chains, 1-chains and 0-chains, respectively. Consider a sequence of homomorphisms (boundary maps) of Abelian groups, namely,
\[
\cdots \mathop{\longrightarrow}\limits^{\partial_3} C_2 \mathop{\longrightarrow}\limits^{\partial_2} C_1\mathop{\longrightarrow}\limits^{\partial_1}
C_0\mathop{\longrightarrow}\limits^{\partial_0} 0.
\]
Let $C_1$ be a group of 1-chains of edges and let $C_0$ be a group of  0-chains of vertices.   In general, $p$-chains under addition form an Abelian group (denoted by $\left(C_p,+\right)$ or $C_p = C_p(K)$, when addition is understood).    Each member of $C_0$ is a 0-chain  (a linear combination of vertices) on the boundary of a 1-chain in $C_1$.   The kernel $\partial_1: C_1(K)\longrightarrow C_0(K)$ is a group denoted by $Z_1$.    Elements of $ \mbox{ker}\partial_1$ are called cycles. The image of $\partial_2
$  is the group $B_1 = B_1(K)$, which is a subgroup of $Z_1$.     Elements of $\mbox{img}\partial_2$ are called boundaries.   The quotient group $H_1 = Z_1/B_1 = \mbox{ker}\partial_1/\mbox{img}\partial_2$ isolates those cycles in $Z_1$ with empty boundaries.   Elements of $ H_1$ are called 1-cycles, {\em i.e.}, those cycles in $Z_1$ that are not boundaries.   From a quotient group perspective, elements of $H_1$ are cosets of $\mbox{img}\partial_2 = B_1$.


\begin{example}\label{ex:BettiNumber} {\bf Sample Cycles}.\\
For example, let edges $e_1,e_2,e_3,e_4,e_5,e_6,e_7$ and vertices $v_1,v_2,v_3,v_4,v_5,v_6$ on a triangulated shape (not shown)
be represented in Fig.~\ref{fig:chainMap}.   Then, we have
\begin{description}
\item[{\large $\boldsymbol{B_1}$}]  collection of boundaries written as 1-chains, {\em e.g.},\\ 
$\boldsymbol{\bullet}\partial(e_3,e_6,e_7) = \partial H_o = v_3 + v_4 - v_6$ is the boundary of the hole $H_o$ in Fig.~\ref{fig:chainMap}.
\item[{\large $\boldsymbol{Z_1}$}] collection of cycles written as 1-chains.   For simplicity, we consider only three cycles in $Z_1$ based on the labelled edges in Fig.~\ref{fig:chainMap}, namely,\\
$\boldsymbol{\bullet}\partial\left(e_1,e_2,e_3,e_4,e_5\right) = v_1 + v_2 + v_3 + v_4 + v_5 - v_5 - v_4 - v_3 - v_2 - v_1 = 0$.\\
$\boldsymbol{\bullet}\partial\left(e_1,e_2,e_7,e_6,e_4, e_5\right) = v_1 + v_2 + v_3 + v_6 + v_4 + v_5 - v_5 - v_4 - v_6 - v_4 - v_3 - v_2 - v_1 = 0$.\\  
$\boldsymbol{\bullet}\partial(e_3,e_6,e_7) = \partial H_o = v_3 + v_4 - v_6$ {\rm (}appears in $B_1${\rm )}.
\qquad \textcolor{blue}{\Squaresteel}
\end{description}
\end{example} 

\begin{remark}
With the quotient group $H_1$, we factor out of $Z_1$ the chains that are the hole boundaries in $B_1$.   From the features of the 1-cycles in homology groups $H_1$, we define a signature of a shape based on the description of 1-cycles, which is easily compared with the signatures of other shapes.  \qquad \textcolor{blue}{\Squaresteel}
\end{remark}
  
Let $(\mathcal{H}_1,\dnear)$ be a collection of 1-cycles on shape complexes equipped with a descriptive proximity $\dnear$~\cite[\S 4]{DiConcilio2016descriptiveProximity},~\cite[\S 1.8]{Peters2016ComputationalProximity}, based on the descriptive intersection $\dcap$ of nonempty sets $A$ and $B$~\cite[\S 3]{Peters2013mcs}.   With respect to 1-cycle sets of connected, oriented edges $e_1,e_2$ in $H_1$, for example, we consider $e_1\dcap e_2$.  For each given 1-cycle $A$ (denoted by $\cyc A$), find all 1-cycles $\cyc B$ in $\mathcal{H}_1$ that have nonempty descriptive intersection with $\cyc A$, {\em i.e.}, $\cyc A\ \dcap\ \cyc B\neq \emptyset$.  This results in a Leader uniform topology on $H_1$~\cite{Leader1959} and a main result in this paper.


Let $A\ \sn\ B$ be a strong proximity between nonempty sets $A$ and $B$, {\em i.e.}, $A$ and $B$ have nonempty intersection.  


\begin{theorem}\label{thm: LeaderUniformTopology}
Let $\left(\mathcal{H}_1, \left\{\sn,\dnear\right\}\right)$ be a collection of 1-dimensional homology groups $H_1$ equipped with a proximal relator  $\left\{\sn,\dnear\right\}$ and which is a collection of 1-cycles on a simplicial complex covering a finite, bounded planar shape and let 
\[
\Phi(\mathcal{H}_1) = \left\{\Phi(\cyc A):\mbox{1-cycle}\ \cyc A\in \mathcal{H}_1\right\}\ \mbox{{\rm (}{\bf Set of descriptions of $\boldsymbol{\cyc A\in \mathcal{H}_1}$}{\rm )}}
\]
be a set of descriptions $\Phi(\cyc A)$ of 1-cycles $\cyc A$ in $\mathcal{H}_1$.   A Leader uniform topology is derivable from $\Phi(\mathcal{H}_1)$.
\end{theorem}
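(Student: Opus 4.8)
The plan is to transport the descriptive proximity $\dnear$ from the collection $\mathcal{H}_1$ of $1$-cycles onto the set $\Phi(\mathcal{H}_1)$ of their descriptions, to check that the transported relation is again a descriptive (Lodato-type) proximity, and then to invoke Leader's proximity-to-uniformity construction to obtain the asserted uniform topology. Concretely, on $\Phi(\mathcal{H}_1)$ I would declare $\Phi(\cyc A)\ \dnear\ \Phi(\cyc B)$ to hold exactly when $\cyc A\ \dnear\ \cyc B$ holds in $\mathcal{H}_1$, i.e.\ when $\cyc A\ \dcap\ \cyc B\neq\emptyset$, so that two $1$-cycles are related precisely when their feature vectors share a common value. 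Since $\Phi$ maps $\mathcal{H}_1$ onto $\Phi(\mathcal{H}_1)$ and descriptive intersection is preserved under passage to descriptions, this is a well-defined relation on the image.

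Next I would verify that this transported relation satisfies the axioms of a descriptive proximity. Because $\left(\mathcal{H}_1,\left\{\sn,\dnear\right\}\right)$ is a proximal relator space and $\dnear$ is a descriptive proximity in the sense of Di Concilio and Peters, the relation on $\Phi(\mathcal{H}_1)$ inherits reflexivity on nonempty descriptions, symmetry, compatibility with finite unions, and the Lodato mixing axiom; each of these carries over verbatim from $\mathcal{H}_1$. With these axioms in hand I would define the closure operator $\cl$ on $\Phi(\mathcal{H}_1)$ by $\cl\bigl(\Phi(\cyc A)\bigr)=\left\{\Phi(\cyc B)\in\Phi(\mathcal{H}_1):\Phi(\cyc B)\ \dnear\ \Phi(\cyc A)\right\}$. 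The descriptive-proximity axioms are exactly what is needed for $\cl$ to be a Kuratowski closure operator, so $\cl$ determines a topology $\tau_\Phi$ on $\Phi(\mathcal{H}_1)$, whose neighbourhoods of $\Phi(\cyc A)$ are the sets $\Phi(\cyc B)$ for which $\Phi(\cyc A)$ is not $\dnear$-near the complement of $\Phi(\cyc B)$.

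Finally, I would appeal to Leader's theorem on proximity spaces~\cite{Leader1959}: the family of proximal neighbourhoods coming from a proximity is a base for a uniformity whose induced topology coincides with the proximity topology. Applied to $\dnear$ on $\Phi(\mathcal{H}_1)$, this shows that $\tau_\Phi$ is a Leader uniform topology, and it is derivable from $\Phi(\mathcal{H}_1)$, which is what the theorem asserts. The step I expect to be the main obstacle is the verification that the transported relation is genuinely a descriptive proximity and not merely a reflexive symmetric relation --- in particular the Lodato mixing axiom --- since distinct $1$-cycles may carry identical descriptions, so one must confirm that $\Phi$ behaves as a well-defined quotient map compatible with the descriptive intersection $\dcap$. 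Once that compatibility is secured, the remainder is a routine application of the standard proximity-to-uniformity machinery.
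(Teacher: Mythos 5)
Your proposal is correct in substance but follows a genuinely different route from the paper. The paper proves Theorem~\ref{thm: LeaderUniformTopology} as an immediate consequence of Lemma~\ref{lemma:LeaderUniformTopologyOnH1}, whose proof stays at the level of homology nerves and mimics Leader's construction steps directly: for $\Nrv_1\mathcal{H}_1\ \sn\ \Nrv_2\mathcal{H}_1$ it uses $\sn\Rightarrow\dnear$ (Lemma~\ref{thm:sn-implies-near}) to get $\Nrv_1\mathcal{H}_1\ \dcap\ \Nrv_2\mathcal{H}_1\neq\emptyset$, and then checks that both the descriptive intersection and the descriptive union of such nerves again land in $\Phi(\mathcal{H}_1)$; this closure of $\Phi(\mathcal{H}_1)$ under $\dcap$ and $\dcup$ is what the paper treats as the Leader-style generation of the uniform topology. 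You instead transport $\dnear$ to the description space $\Phi(\mathcal{H}_1)$, verify the descriptive Lodato axioms there, build the proximity topology via a Kuratowski closure operator, and only then appeal to the proximity-to-uniformity correspondence. Your route is more systematic and makes explicit two points the paper glosses over: that $\Phi$ is a well-defined quotient compatible with $\dcap$ (distinct 1-cycles may share a description), and that the resulting relation on descriptions really is a proximity rather than just a reflexive symmetric relation. The paper's route is shorter and never needs the closure-operator verification. One caveat on your final step: the general claim that proximal neighbourhoods base a uniformity whose induced topology is the proximity topology needs an Efremovi\v{c}-type axiom, which (dP0)--(dP4) do not supply; here this is harmless because $\mathcal{H}_1$ is finite (finitely many 1-cycles on a finite complex covering a bounded shape), so $\dnear$ reduces to nonempty descriptive intersection and the induced structure is trivially uniformizable, but you should note the finiteness explicitly rather than cite the correspondence in full generality.
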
   


\section{Preliminaries}
This section briefly presents the basic approach to defining finite, bounded planar shape barcodes based on two useful proximities (strong spatial proximity $\sn$ and descriptive proximity $\dnear$).   A \emph{shape barcode} is a feature vector that describes a specific shape in terms of 1-cycle geometry, rank of $H_1$, characteristics of a homology nerve on $H_1$,  closure finiteness and 1-cycle arc characteristics based on a descriptive weak topology on $H_1$.   By \emph{proximity} of a pair of sets, we mean spatial closeness of the sets.   For a complete introduction to spatial proximity, see A. Di Concilio~\cite{Concilio2009} and the earlier overview of proximity by S.A. Naimpally and B.D. Warrack~\cite{Naimpally70withWarrack}.   A proximal hit-and-miss topology is a natural outcome of the traditional forms of proximity (see, {\em e.g.}, G. Beer~\cite[\S 2.2, p. 45]{Beer1993bk}).   By descriptive proximity of a pair of sets, we mean the closeness of the descriptions of the sets.   For a complete study of descriptive proximity, see A. Di Concilio, C. Guadagni, J.F. Peters and S. Ramanna~\cite{DiConcilio2016descriptiveProximity}.   In Section~\ref{sec:descriptiveCW}, a descriptive CW topology (Closure finite Weak topology) is defined for a collection $\mathcal{H}_1$ of homology groups $H_1$ equipped the descriptive proximity $\dnear$. 

\subsection{Basic Approach}
The basic approach in homology in classifying a finite, bounded planar shape $\sh A$ covered with a simplicial complex $K$ is to analyze a collection $\mathcal{H}_1$ of homology groups $H_1$ on $\sh A$, which is a set of 1-cycles.   A \emph{1-cycle} $A$ in $\mathcal{H}_1$ (denoted by $\cyc A$) is a simple, closed, connected path containing 1-simplexes (edges) that are not boundaries of holes in $\sh A$.   The story starts by identifying $1$-dimensional homology groups $Z_1$ ({\em i.e.}, groups whose members are cycles that are closed, connected paths on $1$-simplexes) and 1-dimensional groups $B_1$ containing cycles that are boundaries of holes.   From $Z_1$ and $B_1$, we then derive a homology group $H_1 = Z_1/B_1$ (a quotient group which factors out the cycle boundaries in $Z_1$) containing 1-cycles.

Notice that every planar shape has a distinguished 1-cycle, namely, the contour of a shape.   The features (distinguishable characteristics) of 1-cycles in $H_1$ provide a barcode for a particular shape $\sh A$, which is a feature vector in an $n$-dimensional Euclidean space $\mathbb{R}^n$.   A shape  $\sh A$ barcode describes $\sh A$ and is an instance of the signature of the shape (denoted by $\sig(\sh A)$).    In the study of a shape $\sh A$ that \emph{persists} and yet changes over time, the rank of $H_1$ is an important shape characteristic to include in the signature $\sig(\sh A)$.   In simple terms, the rank of $H_1$ is the number of 1-cycles in $H_1$~\cite[\S 2.2, p. 96]{Munch2017TDArank} on complex $K$ on a shape $\sh A$.
The \emph{rank} of $H_1$ (denoted by $rH_1$) is also called the Betti number of $H_1$.   Viewing the rank of $H_1$ in another way, the Betti number of $H_1$ is the number $\mathbb{Z}$ summands, when $H_1$ is written as the direct sum of its cyclic subgroups~\cite[\S 2.1, p. 1390]{Hatcher2002CUPalgebraicTopology}.  For example, the rank of $Z_1$ for Example~\ref{ex:BettiNumber} is 2.

\subsection {Framework for Two Recent Proximities}
This section briefly presents a framework for two recent types of proximities, namely, \emph{strong proximity} and the more recent \emph{descriptive proximity} in the study of computational proximity~\cite{Peters2016ComputationalProximity}.

Let $A$ be a nonempty set of vertices, $p\in A$ in a bounded region $X$ of the Euclidean plane.  An \emph{open ball} $B_r(p)$ with radius $r$ is defined by
\[
B_r(p) = \left\{q\in X: \norm{p - q} < r\right\}\ \mbox{(Open ball with center $p$, radius $r$)}.
\]
The \emph{closure} of $A$ (denoted by $\cl A$) is defined by
\[
\cl A = \left\{q\in X: B_r(q)\subset A\ \mbox{for some $r$}\right\}\ \mbox{(Closure of set $A$)}.
\]
The \emph{boundary} of $A$ (denoted by $\bdy A$) is defined by
\[
\bdy A = \left\{q\in X: B(q)\subset A\ \cap\ X\setminus A\right\}\ \mbox{(Boundary of set $A$)}.
\]
Of great interest in the study of shapes is the interior of a shape, found by subtracting the boundary of a shape from its closure.  In general, the \emph{interior} of a nonempty set $A\subset X$ (denoted by $\Int A$) defined by
\[
\Int A = \cl A - \bdy A\ \mbox{(Interior of set $A$)}.
\]


\setlength{\intextsep}{0pt}
\begin{wrapfigure}[13]{R}{0.42\textwidth}
\begin{minipage}{5.0 cm}
\centering
\begin{pspicture}
(-0.2,-1.5)(5,3)
\psline[linestyle=solid]%
(4,0)(3,-1)(1,-1)(0,0)
\psline[linecolor=blue,arrowsize=0pt 5]{->}(4,0)(3,-1)
\psline[linestyle=solid]%
(1,-1)
\pspolygon[fillstyle=solid,fillcolor=lightgray](3,1)(3,-1)(1.8,0.2)
\psdots[dotstyle=o, linewidth=1.2pt,linecolor = black, fillcolor = yellow]%
(0,0)(1,-1)(1,1)(1.8,0.2)(3,1)(3,-1)(1.8,0.2)(4,0)(4,2)(4.5,1.5)
\psdots[dotstyle=o, linewidth=1.2pt,linecolor = black, fillcolor = red]%
(3,1)(3,-1)
\psline[linecolor=blue,arrowsize=0pt 5]{->}(3,1)(4,2)
\psline[linecolor=blue,arrowsize=0pt 5]{->}(4,2)(4.5,1.5)
\psline[linecolor=blue,arrowsize=0pt 5]{->}(4.5,1.5)(4,0)
\rput(4.0,2.25){\textcolor{black}{\large $\boldsymbol{v_7}$}}\rput(4.75,1.5){\textcolor{black}{\large $\boldsymbol{v_8}$}}
\psline[linecolor=blue,arrowsize=0pt 5]{->}(0,0)(1,1)
\psline[linecolor=blue,arrowsize=0pt 5]{->}(1,1)(3,1)
\psline[linecolor=blue,arrowsize=0pt 5]{->}(3,1)(3,-1)
\psline[linecolor=blue,arrowsize=0pt 5]{->}(3,-1)(1,-1)
\psline[linecolor=blue,arrowsize=0pt 5]{->}(1,-1)(0,0)
\psline[linecolor=blue,arrowsize=0pt 5]{->}(2.98,-0.98)(1.8,0.2) 
\psline[linecolor=blue,arrowsize=0pt 5]{->}(1.81,0.21)(2.98,0.98) 
\rput(2.5,0.1){\textcolor{black}{\large $\boldsymbol{H_o}$}}
\rput(3,1.25){\textcolor{black}{\large $\boldsymbol{v_3}$}}\rput(1,1.25){\textcolor{black}{\large $\boldsymbol{v_2}$}}
\rput(-0.25,0){\textcolor{black}{\large $\boldsymbol{v_1}$}}\rput(3,-1.25){\textcolor{black}{\large $\boldsymbol{v_4}$}}
\rput(1,-1.25){\textcolor{black}{\large $\boldsymbol{v_5}$}}\rput(1.4,0.2){\textcolor{black}{\large $\boldsymbol{v_6}$}}
\rput(4.4,0.2){\textcolor{black}{\large $\boldsymbol{v_9}$}}
\rput(0.25,0.8){\textcolor{black}{\large $\boldsymbol{\cyc A}$}}
\rput(3.2,1.8){\textcolor{black}{\large $\boldsymbol{\cyc B}$}}
\end{pspicture}
\caption[]{$\boldsymbol{\cyc A\ \sn\ \cyc B}$}
\label{fig:sn}
\end{minipage}
\end{wrapfigure}

\emph{Proximities} are nearness relations.  In other words, a \emph{proximity} between nonempty sets is a mathematical expression that specifies the closeness of the sets.   A \emph{proximity space} results from endowing a nonempty set with one or more proximities.   Typically, a proximity space is endowed with a common proximity such as the proximities from \u Cech~\cite{Cech1966}, Efremovi\u c~\cite{Efremovic1952}, Lodato~\cite{Lodato1962}, and Wallman~\cite{Wallman1938}, or the more recent descriptive proximity~\cite{Peters2013mcsintro}.

\subsection{Strong Proximity}
Nonempty sets $A,B$ in a space $X$ equipped with the strong proximity $\sn$ are \emph{strongly near} [\emph{strongly contacted}] (denoted $A\ \sn\ B$), provided the sets have at least one point in common.   The strong contact relation $\sn$ was introduced in~\cite{Peters2015JangjeonMSstrongProximity} and axiomatized in~\cite{PetersGuadagni2015stronglyNear},~\cite[\S 6 Appendix]{Guadagni2015thesis} (see, also,~\cite[\S 1.5]{Peters2016ComputationalProximity},~\cite{Peters2015JangjeonMSstrongProximity,Peters2015AMSJmanifolds}) and elaborated in~\cite{Peters2016ComputationalProximity}.


Let $A, B, C \subset X$ and $x \in X$.  The relation $\sn$ on the family of subsets $2^X$ is a \emph{strong proximity}, provided it satisfies the following axioms.

\begin{description}
\item[{\rm\bf (snN0)}] $\emptyset\ \notfar\ A, \forall A \subset X $, and \ $X\ \sn\ A, \forall A \subset X$.
\item[{\rm\bf (snN1)}] $A\ \sn\ B \Leftrightarrow B\ \sn\ A$.
\item[{\rm\bf (snN2)}] $A\ \sn\ B$ implies $A\ \cap\ B\neq \emptyset$. 
\item[{\rm\bf (snN3)}] If $\{B_i\}_{i \in I}$ is an arbitrary family of subsets of $X$ and  $A\ \sn\ B_{i^*}$ for some $i^* \in I \ $ such that $\Int(B_{i^*})\neq \emptyset$, then $  \ A \sn (\bigcup_{i \in I} B_i)$ 
\item[{\rm\bf (snN4)}]  $\mbox{int}A\ \cap\ \mbox{int} B \neq \emptyset \Rightarrow A\ \sn\ B$.  
\qquad \textcolor{blue}{$\blacksquare$}
\end{description}

\noindent When we write $A\ \sn\ B$, we read $A$ is \emph{strongly near} $B$ ($A$ \emph{strongly contacts} $B$).  The notation $A\ \notfar\ B$ reads $A$ is not strongly near $B$ ($A$ does not \emph{strongly contact} $B$). For each \emph{strong proximity} (\emph{strong contact}), we assume the following relations:
\begin{description}
\item[{\rm\bf (snN5)}] $x \in \Int (A) \Rightarrow x\ \sn\ A$ 
\item[{\rm\bf (snN6)}] $\{x\}\ \sn\ \{y\}\ \Leftrightarrow x=y$  \qquad \textcolor{blue}{$\blacksquare$} 
\end{description}

For strong proximity of the nonempty intersection of interiors, we have that $A \sn B \Leftrightarrow \Int A \cap \Int B \neq \emptyset$ or either $A$ or $B$ is equal to $X$, provided $A$ and $B$ are not singletons; if $A = \{x\}$, then $x \in \Int(B)$, and if $B$ too is a singleton, then $x=y$. It turns out that if $A \subset X$ is an open set, then each point that belongs to $A$ is strongly near $A$.  The bottom line is that strongly near sets always share points, which is another way of saying that sets with strong contact have nonempty intersection.   

\begin{example}\label{ex:1cycles}
Assume that a finite, bounded shape $\sh A$ is covered by a simplicial complex containing 1-cycles $\cyc A,\cyc B$.   Let 1-cycle $\cyc A$ be represented by a sequence of vertices 
\[
v_1\rightarrow v_2\rightarrow v_3\rightarrow v_6\rightarrow v_4\rightarrow v_5\rightarrow v_1\ \mbox{($\cyc A$)},
\]
and let 1-cycle $\cyc B$ be represented by a sequence of vertices 
\[
v_3\rightarrow v_7\rightarrow v_8\rightarrow v_9\rightarrow v_4\rightarrow v_6\rightarrow v_3\ \mbox{($\cyc B$)},
\]
as shown in Fig.~\ref{fig:sn}.  Notice, for example, that the interior of 1-cycle $\cyc A$ includes the arc $\arc{v_3v_6}$, which is also in the interior of 1-cycle $cyc B$.  In this case, $\Int(\cyc A),\Int(\cyc B)$ have $\arc{v_3v_6}$ in common.   Hence, from axiom (snN4), $\cyc A\ \sn\ \cyc B$.
\qquad \textcolor{blue}{\Squaresteel}
\end{example}


\begin{definition}\label{def:homologyNerve}
Let $K$ be a simplicial complex covering a shape $\sh A$ and let $\mathcal{H}_1$ be the collection of 1-cycles in the homology groups on $K$.   A \emph{homology nerve} on $\mathcal{H}_1(K)$  (denoted by $\Nrv \mathcal{H}_1$) is defined by
\[
\Nrv \mathcal{H}_1 = \left\{\cyc A\in \mathcal{H}_1: \bigcap \cyc A\neq \emptyset\right\}\ \mbox{\rm ({\bf Homology Nerve}).\qquad \textcolor{blue}{\Squaresteel}}
\]
\end{definition}


The assumption made here is that every finite planar shape is bounded by a simple closed curve and has a nonempty interior. 
\begin{conjecture}
Every finite, bounded, planar shape with a decomposition and with at least one hole contains
a homology nerve that intersects with the boundary of a hole.
\qquad \textcolor{blue}{\Squaresteel}
\end{conjecture}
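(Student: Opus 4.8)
The plan is to fix one hole of $\sh A$ and, by an explicit local construction near that hole, to exhibit a finite subcollection of $\mathcal{H}_1$ whose members have a common point that lies on the boundary of the hole; such a subcollection is a homology nerve in the sense of Definition~\ref{def:homologyNerve}, and the conclusion that it meets the hole boundary is then immediate. So let $H_o$ be a hole of the finite, bounded planar shape $\sh A$ and let $K$ be the given decomposition. After one barycentric subdivision (still a decomposition of $\sh A$) we may assume $K$ is \emph{collar-like along} $\partial H_o$: every $2$-simplex of $K$ incident to $\partial H_o$ meets $\partial H_o$ in exactly one edge, and the vertex of that $2$-simplex opposite to this edge lies in the interior of $\sh A$ and is distinct from every vertex of $\partial H_o$. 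Write $\partial H_o = e_1 + \cdots + e_k$ with $k \geq 3$ and $e_i = [w_i, w_{i+1}]$ (indices mod $k$), and for each $i$ let $\sigma_i = [w_i, w_{i+1}, u_i]$ be the unique $2$-simplex of $K$ on the solid side of $e_i$.

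First I would build the cycles. Using coefficients in $\mathbb{Z}$ mod $2$, put
\[
\cyc A_i \assign \partial H_o + \partial \sigma_i \qquad (i = 1, \ldots, k).
\]
The edge $e_i$ cancels, so $\cyc A_i$ is the closed connected path obtained from $\partial H_o$ by replacing $e_i$ with the two edges $[w_i, u_i]$ and $[u_i, w_{i+1}]$; collar-likeness guarantees $u_i$ is not a vertex of $\partial H_o$, so $\cyc A_i$ is simple. Since $\partial \sigma_i$ is a boundary, each $\cyc A_i$ is a $1$-cycle homologous to $\partial H_o$; since $K$ covers $\sh A$ but not the missing region of $H_o$, the cycle $\partial H_o$ has no filling $2$-chain, its class is nonzero in $H_1$, and hence $\cyc A_i$ is non-bounding; moreover $\cyc A_i$ contains the edges $[w_i, u_i]$ and $[u_i, w_{i+1}]$, whose relative interiors lie in the interior of $\sh A$ and so are not boundary edges of any hole. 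Thus each $\cyc A_i$ is a legitimate $1$-cycle in $\mathcal{H}_1$. The $\cyc A_i$ are pairwise distinct, each still passes through every vertex $w_1, \ldots, w_k$ of $\partial H_o$, and for $i \neq j$ the cycles $\cyc A_i$ and $\cyc A_j$ share every edge $e_\ell$ with $\ell \notin \{i, j\}$ (nonempty as $k \geq 3$).

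Next I would identify the nerve and conclude. The subcollection $\mathcal{F} = \{\cyc A_1, \ldots, \cyc A_k\} \subseteq \mathcal{H}_1$ satisfies
\[
\bigcap_{i=1}^{k} \cyc A_i \ \supseteq\ \{w_1, \ldots, w_k\} \ \neq\ \emptyset ,
\]
so $\mathcal{F}$ is a homology nerve contained in $\Nrv\mathcal{H}_1$ as in Definition~\ref{def:homologyNerve}; in fact its members are pairwise strongly near, as their interiors all encircle $H_o$. Because $\{w_1, \ldots, w_k\} \subseteq \partial H_o$, both the nerve $\mathcal{F}$ and its common intersection meet the boundary of the hole $H_o$, which is the assertion.

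The cancellation identity for $\cyc A_i$ and the intersection bookkeeping are routine. I expect the one real obstacle to be the preliminary reduction to a collar-like decomposition and the attendant verification that every $\cyc A_i$ genuinely lies in $\mathcal{H}_1$ (simple, closed, connected, non-bounding, and not built solely from hole-boundary edges). If one declines to subdivide, then whenever an apex $u_i$ coincides with some vertex $w_j$ the detour degenerates into a figure-eight, and one must keep instead the sub-loop that still encircles $H_o$, which requires a short Jordan-curve argument; a hole touching the outer contour of $\sh A$ can be treated in the same way, since the solid-side $2$-simplex $\sigma_i$, and hence the whole construction, still exists there.
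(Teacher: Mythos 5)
The paper offers no proof to compare against: this statement is explicitly left as a conjecture, so your attempt is a proposed resolution of an open claim, not a reconstruction of an argument in the text. Judged on its own, your collar construction is a reasonable and largely convincing strategy: the cancellation $\cyc A_i = \partial H_o + \partial\sigma_i$ does produce simple closed paths homologous to $\partial H_o$, hence non-bounding, and their common intersection contains the vertices $w_1,\dots,w_k$ of $\partial H_o$, so a subcollection with nonempty common intersection meeting the hole boundary is indeed exhibited in the sense of Definition~\ref{def:homologyNerve}.

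The one genuine vulnerability is membership of your cycles in $\mathcal{H}_1$. The paper defines a 1-cycle of $\mathcal{H}_1$ as a ``simple, closed, connected path containing 1-simplexes (edges) that are not boundaries of holes in $\sh A$,'' and the most natural reading is that \emph{none} of its edges is a hole-boundary edge. Each $\cyc A_i$ consists of $k-1$ edges of $\partial H_o$ together with only two interior edges, so under that reading every one of your cycles is disqualified by the very definition you invoke, and your nerve has no legitimate members; your parenthetical justification (``contains edges \dots not boundary edges of any hole'') silently adopts the weaker reading. The fix stays within your framework: in the collar, take instead the zigzag cycle $w_1\rightarrow u_1\rightarrow w_2\rightarrow u_2\rightarrow\cdots\rightarrow u_k\rightarrow w_1$, whose edges $[w_i,u_i],[u_i,w_{i+1}]$ are all interior; since $\partial\bigl(\sum_i\sigma_i\bigr)=\partial H_o+\mbox{zigzag}$ mod 2, it is homologous to $\partial H_o$, hence non-bounding, it avoids all hole-boundary edges, and it still meets $\partial H_o$ in the vertices $w_j$, so the conclusion survives under either reading. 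A second, milder gap: the conjecture concerns a shape ``with a decomposition,'' and the nerve should presumably live in the homology groups of the \emph{given} complex, whereas your barycentric subdivision replaces that complex; you acknowledge this, but the no-subdivision alternative (the figure-eight/Jordan-curve step when some $u_i$ coincides with a $w_j$) is exactly where the delicate details sit, and it is left as a sketch rather than carried out.
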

\begin{conjecture}
Every finite, bounded, planar shape with a decomposition and with at least one hole contains
a homology nerve that does not intersect with the boundary of any hole. 
\qquad \textcolor{blue}{\Squaresteel}
\end{conjecture}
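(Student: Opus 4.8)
The plan is to build the required homology nerve entirely inside a thin collar of the outer contour of $\sh A$, a region that no hole boundary can reach. Because $\sh A$ is bounded by a simple closed curve $\bdy(\sh A)$, has nonempty interior, and carries only finitely many holes --- each the boundary of a chain in $B_1$ lying in $\Int(\sh A)$ at positive distance from $\bdy(\sh A)$ --- the open set $U=\Int(\sh A)\setminus\bigcup_{H\ \text{a hole}}\cl(H)$ contains an open collar of $\bdy(\sh A)$. Fix $\varepsilon>0$ so small that the intersection of the $\varepsilon$-neighbourhood of $\bdy(\sh A)$ with $\sh A$ lies in $U$; then this $\varepsilon$-collar meets no hole boundary $\bdy H$. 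Every $1$-cycle constructed below will be supported in this collar, so the only remaining task is to find inside it a nonempty family of $1$-cycles with a common point.

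First I would fix the decomposition. Let $K$ be the given simplicial complex covering $\sh A$, chosen (or barycentrically subdivided, if the given one is too coarse) so that its boundary subcomplex realises the distinguished contour $1$-cycle $\cyc C$ and so that the closed star of every boundary vertex is contained in the $\varepsilon$-collar; this is possible since $\bdy(\sh A)$ is compact and $K$ is finite. Pick one boundary vertex $v$; it lies on $\cyc C$, where it is incident to two contour edges $e$ and $f$. For $e=[u,v]$, let $\sigma$ be a $2$-simplex of $K$ having $e$ as a face and let $z$ be its third vertex (an interior vertex, hence in the collar). Rerouting $\cyc C$ across $\sigma$ --- replacing the edge $e$ by the other two edges of $\sigma$ --- yields an edge loop $\cyc C_e$ with $\cyc C_e-\cyc C=\pm\,\partial\sigma\in B_1$. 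Thus $\cyc C_e$ is a simple, closed, connected $1$-cycle homologous to $\cyc C$, no edge of which is a hole boundary, so $\cyc C_e\in\mathcal H_1$; and $\cyc C_e$ still passes through $v$, via the untouched edge $f$. The same construction at $f$ gives $\cyc C_f$.

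Next I would assemble the nerve and conclude. Put $\Nrv=\{\cyc C,\ \cyc C_e,\ \cyc C_f\}$ (more members are obtained by reusing the construction at other boundary vertices or by iterating the reroute, but three already suffice). Every member contains the vertex $v$, so $v\in\bigcap_{\cyc B\in\Nrv}\cyc B$ and in particular $\bigcap\Nrv\neq\emptyset$, so $\Nrv$ is a homology nerve in the sense of Definition~\ref{def:homologyNerve}; the shared point $v$ moreover witnesses $\cyc B\ \sn\ \cyc B'$ for every pair by axiom (snN2). Since every member of $\Nrv$ is supported in the $\varepsilon$-collar and that collar is disjoint from every $\bdy H$, we get $\left(\bigcup_{\cyc B\in\Nrv}\cyc B\right)\cap\left(\bigcup_{H\ \text{a hole}}\bdy H\right)=\emptyset$, which is precisely the assertion.

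The hard part will be the two places where the hypotheses must genuinely be used. First, the whole argument rests on the holes being \emph{strictly interior}, i.e. $\bdy H\cap\bdy(\sh A)=\emptyset$ for every hole $H$; a hole whose boundary touches the contour would destroy the collar and is the only real loophole, and ruling it out is exactly what ``bounded by a simple closed curve with nonempty interior'' buys us. Second, one must know the decomposition can be made fine enough to place a collar $2$-simplex on a chosen contour edge; if that cannot be arranged, the statement still holds via the degenerate nerve $\Nrv=\{\cyc C\}$, for which $\bigcap\Nrv=\cyc C\neq\emptyset$ and $\cyc C$ is a contour, hence disjoint from hole boundaries --- but the interesting multi-cycle nerve needs the refinement. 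It remains only to carry out the routine bookkeeping that each rerouted loop $\cyc C_e$ has zero boundary and sits in the homology class of $\cyc C$, so that on the cycles built here the two descriptions of ``$1$-cycle'' used in the paper --- a coset of $B_1$, and a simple closed edge path none of whose edges bounds a hole --- coincide.
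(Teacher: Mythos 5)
This statement is one of the paper's explicitly labelled \emph{conjectures} (stated in tandem with the companion conjecture asserting the existence of a homology nerve that \emph{does} meet a hole boundary); the paper offers no proof of it, so your argument has to stand entirely on its own, and it does not. The whole construction rests on the claim, asserted in your first paragraph, that every hole boundary lies in $\Int(\sh A)$ at positive distance from $\bdy(\sh A)$, so that a thin $\varepsilon$-collar of the contour misses the closure of every hole. Nothing in the paper's hypotheses delivers this. A hole here is witnessed by a cycle in $B_1$ of the given decomposition, and its boundary may meet the shape's contour: the paper's own running example, Fig.~\ref{fig:chainMap}, is exactly of this kind, since the hole $H_o$ has boundary vertices $v_3,v_4,v_6$ with $v_3$ and $v_4$ lying on the outer cycle. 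In that situation there is no collar at all, every non-bounding $1$-cycle (the contour included) passes through a hole-boundary vertex, and even your fallback nerve $\{\cyc C\}$ intersects $\bdy H_o$. You do flag strict interiority of the holes as the ``only real loophole'' and claim it is bought by ``bounded by a simple closed curve with nonempty interior,'' but that hypothesis constrains the outer contour of the ideal shape, not where the holes of the decomposition sit relative to it; ruling out the touching configuration (or showing how to route a nerve around it) is precisely the unresolved content of the conjecture, which is why the author leaves it as a conjecture rather than a theorem. Barycentric subdivision, which you invoke to make the complex fine enough, does not help: it leaves the underlying space, and hence the contact between hole closures and the contour, unchanged.

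A secondary problem: even where your collar does exist, the rerouted loops $\cyc C_e$ and $\cyc C_f$ differ from $\cyc C$ by $\pm\,\partial\sigma\in B_1$, so all three lie in the same coset of $B_1$, i.e.\ they are the same element of $H_1=Z_1/B_1$. If $\mathcal{H}_1$ is read as the paper defines it (the collection of $1$-cycles of the homology group), your nerve $\{\cyc C,\cyc C_e,\cyc C_f\}$ has one member, not three, and the rerouting buys nothing beyond the degenerate nerve $\{\cyc C\}$; if instead one works with concrete edge cycles, one must also check that the third vertex $z$ of $\sigma$ does not already lie on $\cyc C$ (else $\cyc C_e$ is not simple), which you do not verify. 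Neither of these is fatal in the way the collar assumption is, but together they mean the ``multi-cycle'' nerve you advertise is either illusory or unverified, while the genuinely load-bearing step --- producing even one $1$-cycle of $\mathcal{H}_1$ disjoint from all hole boundaries when a hole touches the contour --- is exactly what remains open.
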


\begin{remark} {\bf Short History of Topological Nerves}.\\
In topology, a nerve structure first appeared in 1926 in a paper on simplicial approximation by P. Alexandroff~\cite{Alexandroff1926MAnnNerfTheorem} and in 1932 in a monograph by P. Alexandroff~\cite[\S 33, p. 39]{Alexandroff1932elementaryConcepts}, elaborated by C. Kuratowski in 1933~\cite{Kuratowsk1933FMfundamentalNerveTheorem}.
Let the system of sets $F_1,\dots,F_s$  and system of vertices $v_1,\dots,v_s$ of a complex $K$ be related in such a way that the sets $F_{i_1},\dots,F_{r_F}$ have nonempty intersection if and only if the vertices $v_{i_0},\dots,v_{i_r}$ belong to $K$.   Then the complex $K$ is called the nerve of the system of sets in $K$.   A fundamental theorem concerning simplicial nerve complexes is given by B. Gr\"{u}nbaum in 1970~\cite{Grunbaum1970EMnerveComplexes}, namely, 
\begin{theorem}\label{thm: fundamentalNerveTheorem}
Each simplicial complex has the same homotopy type as its nerve.
\end{theorem}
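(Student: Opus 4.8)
The plan is to deduce the statement from the classical Nerve Lemma applied to a canonical good cover of the underlying polyhedron $|K|$. For each vertex $v$ of $K$ let $\overline{\mathrm{st}}(v)\subset |K|$ denote the closed star of $v$ (the union of all closed simplices of $K$ having $v$ as a vertex), and put $\mathcal{U}_K=\{\overline{\mathrm{st}}(v):v\in K^{(0)}\}$. The first step is the elementary combinatorial identity that, for vertices $v_0,\dots,v_n$ of $K$, the intersection $\overline{\mathrm{st}}(v_0)\cap\cdots\cap\overline{\mathrm{st}}(v_n)$ is nonempty exactly when $\{v_0,\dots,v_n\}$ spans a simplex $\sigma$ of $K$, in which case it equals the closed star $\overline{\mathrm{st}}(\sigma)$. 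In the language of the Remark preceding the theorem, this exhibits $K$ as (simplicially isomorphic to) the nerve of the system $\mathcal{U}_K$, so ``the nerve of $K$'' may be read as $\Nrv(\mathcal{U}_K)$; moreover the same argument applies verbatim to the cover of $|K|$ by closed maximal simplices, whose nerve is the other customary notion of $\Nrv(K)$, and for that cover the conclusion is no longer formal.

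Next I would verify that $\mathcal{U}_K$ is a \emph{good cover}: the closed star of any simplex is a cone with apex its barycenter, hence contractible; and by the identity of the first step every nonempty finite intersection of members of $\mathcal{U}_K$ is again a closed star, so is contractible or empty. The space $|K|$ is paracompact --- automatically so when $K$ is finite, which is the case of a bounded planar shape, and in general for a simplicial complex in the CW topology --- so the hypotheses of the Nerve Lemma are met.

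The substance of the argument is the Nerve Lemma itself, which I would prove by the Mayer--Vietoris blow-up (homotopy colimit) method. Let $\mathcal D$ be the diagram indexed by the face poset of $\Nrv(\mathcal{U}_K)$ that sends a simplex $\{v_0,\dots,v_n\}$ to $\overline{\mathrm{st}}(v_0)\cap\cdots\cap\overline{\mathrm{st}}(v_n)$, with the inclusions as structure maps, and let $B=\operatorname{hocolim}\mathcal D$ be its realization. There are two canonical maps out of $B$: the map $p\colon B\to |K|$ assembled from the inclusions $\overline{\mathrm{st}}(v_0)\cap\cdots\cap\overline{\mathrm{st}}(v_n)\hookrightarrow |K|$, and the map $q\colon B\to |\Nrv(\mathcal{U}_K)|$ that collapses the piece over $\{v_0,\dots,v_n\}$ to that simplex. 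I would show $p$ is a homotopy equivalence by the standard partition-of-unity argument (since $\mathcal{U}_K$ covers the paracompact space $|K|$, $p$ admits a homotopy inverse built from a partition of unity subordinate to the open-star refinement of $\mathcal{U}_K$), and $q$ is a homotopy equivalence by the Projection Lemma for homotopy colimits, because every term of $\mathcal D$ is contractible, so $B$ is homotopy equivalent to the homotopy colimit of the constant one-point diagram over the same poset, which is $|\Nrv(\mathcal{U}_K)|$. Composing, $|K|\simeq B\simeq|\Nrv(\mathcal{U}_K)|$.

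The main obstacle is this last step: one must verify carefully that $p$ and $q$ are genuine homotopy equivalences, and that the intermediate space $B$ has the homotopy type of a CW complex, so that weak equivalences can be upgraded to honest ones. For $q$ this is the Projection Lemma and uses only contractibility of the terms of $\mathcal D$; for $p$ it is the delicate point, requiring paracompactness of $|K|$ and a skeletal induction on $\Nrv(\mathcal{U}_K)$ whose inductive step is a Mayer--Vietoris pushout controlled by the Gluing Lemma for homotopy equivalences. An equivalent route that avoids homotopy-colimit language is to construct directly the barycentric-coordinate map $f\colon |K|\to|\Nrv(\mathcal{U}_K)|$ from a partition of unity subordinate to the open-star cover and prove it is a homotopy equivalence by the same skeletal induction; I would use whichever is more economical given the conventions already in place.
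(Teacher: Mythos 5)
Your proposal cannot be checked against an internal argument, because the paper offers none: Theorem~\ref{thm: fundamentalNerveTheorem} is quoted inside the historical remark with a citation to Gr\"{u}nbaum's 1970 paper (just as Borsuk's theorem and the Edelsbrunner--Harer nerve theorem, Theorem~\ref{EHnerve}, are cited without proof). So you are supplying a proof where the paper supplies a reference. The route you chose --- exhibit a canonical good cover of $|K|$ and invoke the Nerve Lemma --- is the standard one, but as written it contains a genuine error in the very first step.

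The combinatorial identity you rely on holds for \emph{open} vertex stars, not closed ones. Take $K$ to be the boundary of a triangle with vertices $a,b,c$ (three edges, no $2$-simplex). Then $\overline{\mathrm{st}}(a)\cap\overline{\mathrm{st}}(b)$ is the closed edge $ab$ together with the isolated point $c$: it is disconnected, hence not contractible, and it is not a closed star; moreover $\overline{\mathrm{st}}(a)\cap\overline{\mathrm{st}}(b)\cap\overline{\mathrm{st}}(c)=\{a,b,c\}\neq\emptyset$ although $\{a,b,c\}$ spans no simplex of $K$. Consequently the nerve of your cover $\mathcal{U}_K$ is the full $2$-simplex while $|K|$ is a circle, so $\mathcal{U}_K$ is not a good cover and its nerve does not even have the homotopy type of $|K|$; both your step one and your step two fail for closed stars. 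The repair is to use \emph{open} stars: there the nonempty-intersection criterion is exactly the spanning condition, and a nonempty intersection is the open star of the spanned simplex, which is star-shaped about any point of that simplex's interior, hence contractible, and the partition-of-unity form of the Nerve Lemma applies to this open cover. But note that then $\Nrv(\mathcal{U}_K)\cong K$, so this reading of the statement is a tautology; the substantive content of Gr\"{u}nbaum's theorem concerns the nerve of the family of maximal closed simplices of $K$, which is precisely the case you defer with ``applies verbatim.'' It does not apply verbatim: that cover is closed, so the subordinate-partition-of-unity argument for $p$ is unavailable, and you need the closed-cover version of the Nerve Lemma (a cover by subcomplexes with all nonempty intersections contractible, handled by the cofibrant homotopy-colimit argument, or simply by Theorem~\ref{EHnerve}, since closed simplices are closed convex sets whose intersections are faces). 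With those two corrections --- open stars for the good-cover step, and the closed/convex nerve theorem for the maximal-simplex cover --- your outline becomes a correct proof.
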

\noindent Earlier, K. Borsuk obtained the following result in 1948.
\begin{theorem}\label{thm:regularDecompositions}~{\rm \cite[Cor. 2, p. 233]{Borsuk1948DecompositionsHomotopyType}}
Finite dimensional spaces admitting similar regular decompositions have necessarily the same homotopy type.
\end{theorem}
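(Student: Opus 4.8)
The plan is to reduce the statement to the nerve theorem: a finite-dimensional space carrying a regular decomposition is homotopy equivalent to the geometric realization of the nerve of that decomposition, and ``similar'' decompositions have isomorphic nerves, hence homotopy equivalent realizations. Recall that a \emph{regular decomposition} of a finite-dimensional compactum $X$ is a finite closed cover $\mathcal{A}=\{A_1,\dots,A_n\}$ each of whose nonempty intersections $A_\sigma:=\bigcap_{i\in\sigma}A_i$ is an absolute retract (in particular contractible), and that decompositions $\mathcal{A}$ of $X$ and $\mathcal{B}$ of $Y$ are \emph{similar} provided $A_\sigma\neq\emptyset$ exactly when $B_\sigma\neq\emptyset$; equivalently, $\Nrv\mathcal{A}$ and $\Nrv\mathcal{B}$ agree as abstract simplicial complexes.

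First I would build the \emph{canonical map} $\kappa_X\colon X\to|\Nrv\mathcal{A}|$. Shrinking $\mathcal{A}$ to an open cover of $X$ and choosing a subordinate partition of unity $\{\varphi_i\}$ (possible since a compactum is normal and the cover is finite), set $\kappa_X(x)=\sum_i\varphi_i(x)\,v_i$, where $v_i$ is the vertex of $\Nrv\mathcal{A}$ labelled by $A_i$. Every $i$ with $\varphi_i(x)>0$ satisfies $x\in A_i$, so those $v_i$ span a simplex of $\Nrv\mathcal{A}$ and $\kappa_X$ is a well-defined continuous map.

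Next I would produce a homotopy inverse $\rho_X\colon|\Nrv\mathcal{A}|\to X$ by induction over the skeleta of the barycentric subdivision $\mathrm{sd}\,\Nrv\mathcal{A}$. A simplex of $\mathrm{sd}\,\Nrv\mathcal{A}$ is a flag $\sigma_0\subsetneq\cdots\subsetneq\sigma_m$ of simplices of $\Nrv\mathcal{A}$, and then $A_{\sigma_0}\supseteq\cdots\supseteq A_{\sigma_m}$; I would maintain the inductive condition that $\rho_X$ carries the closed flag-simplex $[\widehat{\sigma_0},\dots,\widehat{\sigma_m}]$ into $A_{\sigma_0}$. This is consistent on faces (deleting $\widehat{\sigma_0}$ leaves a flag whose minimum is $\sigma_1$, and $A_{\sigma_1}\subseteq A_{\sigma_0}$; deleting any other vertex keeps $\sigma_0$ as minimum), so the already-defined map on the boundary sphere of $[\widehat{\sigma_0},\dots,\widehat{\sigma_m}]$ takes values in $A_{\sigma_0}$, and since $A_{\sigma_0}$ is an absolute retract it extends over the whole simplex into $A_{\sigma_0}$. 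Analogous cell-by-cell extensions, using the same contractible ``carriers'', then yield homotopies $\rho_X\circ\kappa_X\simeq\mathrm{id}_X$ and $\kappa_X\circ\rho_X\simeq\mathrm{id}$, so $X\simeq|\Nrv\mathcal{A}|$. I expect this verification of the nerve theorem to be the main obstacle: it is where regularity and finite-dimensionality are genuinely used, and the carrier bookkeeping behind the two homotopies is delicate. If instead one invokes Theorem~\ref{thm: fundamentalNerveTheorem} (or its standard cover-theoretic form), this whole step is supplied at once.

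Finally, applying the same construction to $Y$ gives $Y\simeq|\Nrv\mathcal{B}|$. Since $\mathcal{A}$ and $\mathcal{B}$ are similar, $\Nrv\mathcal{A}$ and $\Nrv\mathcal{B}$ are isomorphic as abstract simplicial complexes, so their realizations are homeomorphic and therefore homotopy equivalent. Composing the three equivalences,
\[
X\ \simeq\ |\Nrv\mathcal{A}|\ \cong\ |\Nrv\mathcal{B}|\ \simeq\ Y,
\]
so $X$ and $Y$ have the same homotopy type, which is exactly the assertion of the theorem.
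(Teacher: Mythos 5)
The paper itself offers no proof of this statement: it is quoted verbatim from Borsuk (his Cor.~2, p.~233), and the Corollary reproduced immediately afterwards (Borsuk's Cor.~3) is precisely the nerve equivalence you set out to re-derive. So your overall route -- $X\simeq|\Nrv\,\mathcal{A}|$ for a regular decomposition $\mathcal{A}$, similarity gives an isomorphism $\Nrv\,\mathcal{A}\cong\Nrv\,\mathcal{B}$ of abstract complexes, hence homeomorphic realizations and $X\simeq Y$ -- is the intended classical argument, and your final step (similar $\Rightarrow$ isomorphic nerves $\Rightarrow$ homeomorphic polytopes) is unproblematic.

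Two points in your sketch need repair if the nerve equivalence is to be proved rather than quoted. First, a regular decomposition is a \emph{closed} cover, so you cannot ``shrink'' it to an open cover; you must thicken each $A_i$ to an open $U_i\supseteq A_i$ and verify that the thickening creates no new intersections (for a finite closed cover of a compactum this is a routine compactness argument: if $\bigcap_{i\in\sigma}A_i=\emptyset$, then sufficiently small $\varepsilon$-neighbourhoods still have empty intersection, and finitely many such conditions can be met simultaneously). Without that check, $\varphi_i(x)>0$ only gives $x\in U_i$, your assertion ``$x\in A_i$'' is false, and $\kappa_X$ need not land in $|\Nrv\,\mathcal{A}|$. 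Second, the homotopy $\rho_X\circ\kappa_X\simeq\mathrm{id}_X$ cannot be produced ``cell-by-cell,'' because $X$ is merely a finite-dimensional space, not a complex; the carrier induction you describe does work for $\kappa_X\circ\rho_X$ on $|\Nrv\,\mathcal{A}|$, but on the $X$ side one needs the absolute-retract property of the pieces together with homotopy-extension arguments (this is exactly the content of Borsuk's theorem, and where finite-dimensionality genuinely enters). You flag this as the main obstacle, and invoking the quoted nerve theorem (or Borsuk's Cor.~3) does close it -- but then your argument reduces to the correct, and essentially one-line, observation that similar decompositions have isomorphic nerves.
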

As a result of Theorem~\ref{thm:regularDecompositions}, K. Borsuk observed that (i) for every finite dimensional space with a regular decomposition, there exists a polytope with the same homotopy type and (ii) the notion of an Alexandroff nerve makes it possible to construct such a polytope~\cite[p. 233]{Borsuk1948DecompositionsHomotopyType}, leading to\\
\begin{corollary}~{\rm \cite[Cor. 3, p. 234]{Borsuk1948DecompositionsHomotopyType}}
If the simplicial complex $K$ is a geometrical realization of the nerve of a regular decomposition of a finite dimensional space $A$, then the space $A$ and the polytope $\abs{K}$ have the same homotopy type.
\end{corollary}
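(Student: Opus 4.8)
The plan is to deduce this corollary directly from Borsuk's Theorem~\ref{thm:regularDecompositions} by constructing, on the polytope $\abs{K}$, a regular decomposition that is \emph{similar} (in Borsuk's sense) to the given regular decomposition of $A$. Thus essentially all of the work is in producing the companion decomposition on $\abs{K}$ and checking the similarity; the homotopy conclusion is then immediate from the cited theorem.

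First I would fix notation and recall the two notions on which Theorem~\ref{thm:regularDecompositions} rests. A \emph{regular decomposition} of a finite-dimensional space is a finite closed cover all of whose members, and all of whose nonempty finite intersections, are contractible (subject to the local and closure conditions in Borsuk's definition); two decompositions are \emph{similar} when some bijection between their members preserves, in both directions, the pattern of nonempty intersections --- equivalently, their nerves are isomorphic as abstract simplicial complexes. By hypothesis $A$ carries a regular decomposition $\mathcal{D} = \{F_1,\dots,F_s\}$ and $K$ geometrically realizes the nerve $\Nrv\mathcal{D}$, so the vertices $v_1,\dots,v_s$ of $K$ correspond to $F_1,\dots,F_s$ and $[v_{i_0},\dots,v_{i_r}]\in K$ exactly when $F_{i_0}\cap\cdots\cap F_{i_r}\neq\emptyset$.

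Next I would build the decomposition of $\abs{K}$. For each vertex $v_i$ let $S_i\subset\abs{K}$ be its closed star, the union of all closed simplexes of $K$ that contain $v_i$, and set $\mathcal{E} = \{S_1,\dots,S_s\}$. Each $S_i$ is a cone with apex $v_i$, hence contractible; a nonempty intersection $S_{i_0}\cap\cdots\cap S_{i_r}$ occurs precisely when some simplex of $K$ contains all of $v_{i_0},\dots,v_{i_r}$, i.e. precisely when $[v_{i_0},\dots,v_{i_r}]\in K$, and in that case the intersection is the closed star of that simplex, which is star-shaped from its barycenter and so again contractible. Hence $\mathcal{E}$ is a regular decomposition of $\abs{K}$, and the bijection $F_i\mapsto S_i$ matches the intersection patterns of $\mathcal{D}$ and $\mathcal{E}$ because in both families a finite subfamily meets exactly when the corresponding vertex set spans a simplex of $K$. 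Therefore $\mathcal{D}$ and $\mathcal{E}$ are similar regular decompositions of finite-dimensional spaces, and Theorem~\ref{thm:regularDecompositions} gives that $A$ and $\abs{K}$ have the same homotopy type.

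The step I expect to be the main obstacle is verifying that $\mathcal{E}$ meets Borsuk's full technical definition of a regular decomposition --- not merely the contractibility of the stars and their intersections, but also the ambient closure and local-finiteness requirements --- and, relatedly, making precise that ``geometric realization of the nerve'' is being used only up to the combinatorial isomorphism $\Nrv\mathcal{D}\cong K\cong\Nrv\mathcal{E}$ that underlies the similarity. Once the elementary bookkeeping of stars in a simplicial complex is in place, nothing further is needed; one may also note that the special case $A=\abs{K}$ of this argument recovers Gr\"{u}nbaum's Theorem~\ref{thm: fundamentalNerveTheorem}.
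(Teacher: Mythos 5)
The paper itself gives no proof of this corollary (it is quoted directly from Borsuk, who obtains it exactly along the lines you describe: produce on $\abs{K}$ a regular decomposition similar to the given one on $A$ and apply Theorem~\ref{thm:regularDecompositions}), so your overall strategy is the intended one; the problem is that the decomposition you build does not have the properties you claim. Closed vertex stars taken in $K$ itself fail both of your key assertions. Let $K$ be the hollow triangle with vertices $a,b,c$, edges $ab,bc,ca$ and no $2$-simplex. Then $S_a\cap S_b=\overline{ab}\cup\{c\}$, which is disconnected (hence not contractible, and not the closed star of the edge $ab$), and $S_a\cap S_b\cap S_c=\{a,b,c\}\neq\emptyset$ although $\{a,b,c\}$ spans no simplex of $K$. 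So the nerve of $\mathcal{E}=\{S_1,\dots,S_s\}$ need not be isomorphic to $K$, the similarity between $\mathcal{D}$ and $\mathcal{E}$ breaks down, and Theorem~\ref{thm:regularDecompositions} cannot be invoked. The combinatorial fact you are implicitly relying on --- that the intersection pattern of vertex stars reproduces $K$ --- holds for \emph{open} stars, but open stars are not closed sets and therefore cannot serve as members of a Borsuk regular decomposition.

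The standard repair preserves your plan: pass to the barycentric subdivision. For each vertex $v_i$ of $K$ let $B_i\subset\abs{K}$ be the closed star of $v_i$ in $\mathrm{Sd}\,K$, that is, the union of the closed simplexes of $\mathrm{Sd}\,K$ all of whose vertices are barycenters of simplexes of $K$ containing $v_i$. These sets are closed and cover $\abs{K}=\abs{\mathrm{Sd}\,K}$; moreover $B_{i_0}\cap\cdots\cap B_{i_r}\neq\emptyset$ if and only if $v_{i_0},\dots,v_{i_r}$ span a simplex $\tau$ of $K$, and in that case the intersection is the subcomplex of chains all of whose members contain $\tau$, which is a cone with apex the barycenter of $\tau$ and hence contractible. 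With $\mathcal{E}=\{B_1,\dots,B_s\}$ in place of the closed stars in $K$, your bijection $F_i\mapsto B_i$ really does match intersection patterns, $\mathcal{D}$ and $\mathcal{E}$ are similar regular decompositions (modulo checking Borsuk's precise regularity conditions, which you already flag), and the appeal to Theorem~\ref{thm:regularDecompositions} then completes the argument as you intended.
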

A more tractable view of a nerve, more amenable for computational topology, is given by H. Edelsbrunner and J.L. Harer~\cite[\S III.2, p. 59]{Edelsbrunner1999}.   Let $F$ be a finite collection of sets.   A nerve consists of all nonempty subcollections of $F$ (denoted by $\Nrv F$) whose sets have nonempty intersection, {\em i.e.},
\[
\Nrv F = \left\{X\subseteq F: \bigcap X\neq \emptyset\right\}\ \mbox{(Edelsbrunner-Harer Nerve)}.
\]
A nerve is an example of an abstract simplicial complex, regardless of the sets in $F$.   Strongly proximal Edelsbrunner-Harer nerves were introduced in 2016 by J.F. Peters and E. \.{I}nan~\cite{PetersInan2016spNerves}.    Nerve spoke complexes (useful for nerves on Vorono\"{i} tessellations) are introduced in J.F. Peters~\cite{Peters2017arXiv1704-05909spokes}.  An overview of recent work on nerve complexes is given by H. Dao, J. Doolittle, K. Duna, B. Goeckner, B. Holmes and J. Lyle~\cite{Dao2017arXivNerveComplexes}.
\qquad \textcolor{blue}{\Squaresteel}
\end{remark}


\begin{example}
Assume that a shape $\sh A$ is covered by a simplicial complex with homology groups $H_1$ containing 1-cycles $\cyc A,\cyc B$ from Example~\ref{ex:1cycles}.   Hence,
\[
\Nrv \mathcal{H}_1 = \left\{\cyc A,\cyc B\right\}\ \mbox{\rm ({\bf Sample homology nerve}). \qquad \textcolor{blue}{\Squaresteel}}
\]
\end{example}

\begin{lemma}\label{prop:2spoke}
Let homology groups $H_1$ contain 1-cycles $\cyc A, \cyc B$ on complex $K$ covering shape $\sh A$.   Then
$\cyc A\ \sn\ \cyc B \Rightarrow \cyc A\ \cap\ \cyc B \neq \emptyset$, if and only if $\cyc A, \cyc B\in \Nrv \mathcal{H}_1$ for homology nerve complex $\Nrv \mathcal{H}_1\in 2^{\mathcal{H}_1}$ .
\end{lemma}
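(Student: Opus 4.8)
The plan is to read the displayed biconditional as the statement that, for two $1$-cycles $\cyc A,\cyc B$ drawn from the homology groups $H_1$ on $K$, strong contact $\cyc A\ \sn\ \cyc B$ occurs exactly when the pair $\{\cyc A,\cyc B\}$ is a $1$-simplex of the homology nerve $\Nrv\mathcal{H}_1\in 2^{\mathcal{H}_1}$, the clause $\cyc A\ \sn\ \cyc B\Rightarrow\cyc A\ \cap\ \cyc B\neq\emptyset$ merely recording that in that case the two cycles also meet set-theoretically. Both directions are short once the right earlier facts are cited: axiom (snN2) for the strong proximity $\sn$, Definition~\ref{def:homologyNerve} of $\Nrv\mathcal{H}_1$ in its Edelsbrunner--Harer form, and axiom (snN4).

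For the forward direction I would assume $\cyc A\ \sn\ \cyc B$; then (snN2) gives $\cyc A\ \cap\ \cyc B\neq\emptyset$, i.e. $\bigcap\{\cyc A,\cyc B\}\neq\emptyset$, so by the defining condition of the homology nerve the subcollection $\{\cyc A,\cyc B\}$ belongs to $\Nrv\mathcal{H}_1$, hence $\cyc A,\cyc B\in\Nrv\mathcal{H}_1$. For the converse I would assume $\cyc A,\cyc B\in\Nrv\mathcal{H}_1$ with $\{\cyc A,\cyc B\}$ a face of the nerve; then $\cyc A\ \cap\ \cyc B\neq\emptyset$ directly from Definition~\ref{def:homologyNerve}, and I would upgrade this to $\cyc A\ \sn\ \cyc B$ by observing that, in the triangulation $K$ of $\sh A$, two $1$-cycles (simple closed connected edge paths, each with nonempty interior) that meet do so along a shared subcomplex of interior simplexes, so $\Int(\cyc A)\ \cap\ \Int(\cyc B)\neq\emptyset$ and (snN4) applies. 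A re-application of (snN2) then restates $\cyc A\ \cap\ \cyc B\neq\emptyset$, closing the loop.

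The one step that is not bookkeeping is the last one in the converse: passing from mere nonempty intersection $\cyc A\ \cap\ \cyc B\neq\emptyset$ to strong contact $\cyc A\ \sn\ \cyc B$. In general nonempty intersection does not force $\sn$; one needs the shared part to meet the interior of each cycle, exactly as in Example~\ref{ex:1cycles}, where the common arc $\arc{v_3v_6}$ lies in $\Int(\cyc A)\cap\Int(\cyc B)$. I would therefore lean on the convention, implicit in the construction of $\mathcal{H}_1$ and its nerve, that the $1$-cycles being compared carry nonempty interiors and overlap through interior simplexes; if one wishes to allow $1$-cycles meeting only along a common boundary edge, the converse should be stated with the weaker conclusion $\cyc A\ \cap\ \cyc B\neq\emptyset$ rather than $\cyc A\ \sn\ \cyc B$, so that the biconditional stays honest.
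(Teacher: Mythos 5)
Your forward direction is, in effect, the paper's entire proof: the paper argues $\cyc A\ \sn\ \cyc B \Rightarrow \cyc A\ \cap\ \cyc B\neq \emptyset$ by (snN2), and then treats ``$\cyc A\ \cap\ \cyc B\neq \emptyset$ if and only if $\cyc A,\cyc B\in \Nrv \mathcal{H}_1$ for some nerve $\Nrv\mathcal{H}_1\in 2^{\mathcal{H}_1}$'' as an immediate restatement of Definition~\ref{def:homologyNerve}. The divergence is purely in how you parse the awkwardly worded biconditional. You read it as ``$\cyc A\ \sn\ \cyc B$ iff $\cyc A,\cyc B\in\Nrv\mathcal{H}_1$,'' which forces you, in the converse, to upgrade mere nonempty intersection to strong contact via (snN4); the paper instead reads the equivalence as being between the intersection statement (the consequent produced by (snN2)) and nerve membership, so no passage from $\cap$ back to $\sn$ is ever attempted, and the converse is just the nerve definition again. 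Your caveat about that upgrade is well taken and correct: nonempty intersection does not yield $\sn$ unless the cycles share interior points, and nothing in the construction of $\mathcal{H}_1$ guarantees that two cycles of a nerve overlap through interiors rather than touching along a boundary vertex or edge, so your proposed ``convention'' is doing real, unjustified work. But this is a criticism of the lemma's phrasing rather than a gap you need to fill: the paper's own proof, and its only subsequent use of the lemma (in Lemma~\ref{prop:H1}, where just ``$\cyc A\ \sn\ \cyc B \rightarrow \cyc A\ \cap\ \cyc B\neq\emptyset$ for cycles of the nerves'' is invoked), commit only to the weaker reading, which is exactly the fallback statement you suggest at the end. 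So: your proof contains the paper's proof verbatim in its first half; the second half is extra machinery answering a stronger claim the paper does not actually establish, and your own closing remark correctly identifies why that stronger claim should not be asserted as stated.
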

\begin{proof}
$\cyc A\ \sn\ \cyc B \Rightarrow \cyc A\ \cap\ \cyc B\neq \emptyset$ (from (snN2)) $\Leftrightarrow$
$\cyc A, \cyc B\in \Nrv \mathcal{H}_1$ (from Def.~\ref{def:homologyNerve}) for at least one nerve complex $\Nrv \mathcal{H}_1\in 2^{\mathcal{H}_1}$ .
\end{proof}

\begin{lemma}\label{prop:H1}
Let $\Nrv_1 \mathcal{H}_1,\Nrv_2 \mathcal{H}_1$ be homology nerves for homology groups $H_1$ on complex $K$ covering shape $\sh A$.  Then
$\Nrv_1 \mathcal{H}_1\ \sn\ \Nrv_2 \mathcal{H}_1$ implies $\Nrv_1 \mathcal{H}_1\ \cap\ \Nrv_2 \mathcal{H}_1\neq \emptyset$  for some $\cyc A\in \Nrv_1 \mathcal{H}_1$ and $\cyc B\in \Nrv_2 \mathcal{H}_1$.
\end{lemma}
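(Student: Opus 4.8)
The plan is to reduce the statement directly to the strong‑proximity axiom (snN2), treating each homology nerve as an ordinary subset of the collection $\mathcal{H}_1$ of $1$-cycles, i.e.\ as a point of $2^{\mathcal{H}_1}$. First I would recall that, by Definition~\ref{def:homologyNerve}, $\Nrv_1 \mathcal{H}_1$ and $\Nrv_2 \mathcal{H}_1$ are subcollections of $\mathcal{H}_1$ whose members are $1$-cycles, so that the relation $\Nrv_1 \mathcal{H}_1\ \sn\ \Nrv_2 \mathcal{H}_1$ is an instance of the strong proximity $\sn$ applied to two subsets of the space $\mathcal{H}_1$ (equivalently, the relator $\{\sn,\dnear\}$ carried by $\mathcal{H}_1$ extends to $2^{\mathcal{H}_1}$, where the nerves live).

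Next, I would invoke axiom (snN2): $\Nrv_1 \mathcal{H}_1\ \sn\ \Nrv_2 \mathcal{H}_1$ implies $\Nrv_1 \mathcal{H}_1\ \cap\ \Nrv_2 \mathcal{H}_1\neq \emptyset$. Hence there is at least one $1$-cycle, say $\cyc E$, belonging to both nerves. Taking $\cyc A \assign \cyc E \in \Nrv_1 \mathcal{H}_1$ and $\cyc B \assign \cyc E \in \Nrv_2 \mathcal{H}_1$ then witnesses the conclusion: a common $1$-cycle of the two nerves exists, so $\Nrv_1 \mathcal{H}_1\ \cap\ \Nrv_2 \mathcal{H}_1\neq \emptyset$ is realized by $\cyc A$ and $\cyc B$.

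Finally, to round out the argument I would observe, via Lemma~\ref{prop:2spoke}, that this shared $1$-cycle $\cyc E$ is genuinely a member of a homology nerve in $2^{\mathcal{H}_1}$ --- indeed of both $\Nrv_1 \mathcal{H}_1$ and $\Nrv_2 \mathcal{H}_1$ --- so the nonempty spatial overlap propagates from the nerve level down to the $1$-cycle level. I do not expect a genuine obstacle here; the only point requiring a little care is the bookkeeping of membership versus inclusion: the elements of a homology nerve are $1$-cycles (simple closed connected paths of $1$-simplexes), while the nerves themselves are the objects to which $\sn$ is applied, so one must be explicit that (snN2) is being used for subsets of $\mathcal{H}_1$ and that the element it produces is a $1$-cycle lying in each nerve, which is exactly what the statement asserts.
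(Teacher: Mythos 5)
Your proposal is correct and follows essentially the same route as the paper: apply axiom (snN2) to the two nerves viewed as subsets of $\mathcal{H}_1$ to get $\Nrv_1 \mathcal{H}_1\ \cap\ \Nrv_2 \mathcal{H}_1\neq \emptyset$, then use the fact that a homology nerve is a set of 1-cycles (Def.~\ref{def:homologyNerve}, together with Lemma~\ref{prop:2spoke}) to exhibit the 1-cycles $\cyc A$ and $\cyc B$ witnessing the overlap. Your explicit choice of a common 1-cycle as the witness is just a slightly more direct bookkeeping of the same argument.
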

\begin{proof}
$\Nrv_1 \mathcal{H}_1\ \sn\ \Nrv_2 \mathcal{H}_1 \Rightarrow \Nrv_1 \mathcal{H}_1\ \cap\ \Nrv_2 \mathcal{H}_1\neq \emptyset$ (from (snN2)).   Consequently, 
$\cyc A\ \sn\ \cyc B \rightarrow \cyc A\ \cap\ \cyc B\neq \emptyset$ (from Lemma~\ref{prop:2spoke}) for at least one
$\cyc A\in \Nrv \mathcal{H}_1$ and for at least one $\cyc B\in \Nrv \mathcal{H}_2$, since a homology nerve is a set of 1-cycles  (from Def.~\ref{def:homologyNerve}).
\end{proof}

\begin{theorem}\label{thm:H1}
Let $\Nrv_1 \mathcal{H}_1,\Nrv_2 \mathcal{H}_1$ be homology nerves for homology groups $H_1$ on a simplicial complex covering shape $\sh A$.  
$\Nrv_1 \mathcal{H}_1\ \sn\ \Nrv_2 \mathcal{H}_1$ if and only if  $\cyc A\ \sn\ \cyc B$ for some $\cyc A\in \Nrv_1 \mathcal{H}_1$ and $\cyc B\in \Nrv_2 \mathcal{H}_1$.
\end{theorem}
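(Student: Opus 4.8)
The plan is to prove the two implications separately, using only the strong--proximity axioms (snN1)--(snN4) together with Lemma~\ref{prop:H1}. Throughout I treat each homology nerve $\Nrv_i \mathcal{H}_1$ simultaneously as the collection of 1-cycles belonging to it and as the planar region $\bigcup \Nrv_i \mathcal{H}_1$ swept out by those 1-cycles together with their interiors. Since $\sh A$ is finite and bounded with nonempty interior, every 1-cycle $\cyc C\in \Nrv_i \mathcal{H}_1$ satisfies $\Int(\cyc C)\neq\emptyset$, and each nerve is a \emph{finite} union of such regions; these two facts are what make (snN3) and (snN4) usable below.

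For the implication ($\Leftarrow$), I would assume $\cyc A\ \sn\ \cyc B$ for some $\cyc A\in \Nrv_1 \mathcal{H}_1$ and $\cyc B\in \Nrv_2 \mathcal{H}_1$. Since $\Int(\cyc B)\neq\emptyset$ and $\cyc B$ is one of the 1-cycles whose union is $\Nrv_2 \mathcal{H}_1$, axiom (snN3) --- with the index set ranging over all 1-cycles of $\Nrv_2 \mathcal{H}_1$ and $i^\ast$ selecting $\cyc B$ --- gives $\cyc A\ \sn\ \Nrv_2 \mathcal{H}_1$. Applying (snN1) yields $\Nrv_2 \mathcal{H}_1\ \sn\ \cyc A$; since likewise $\Int(\cyc A)\neq\emptyset$ and $\cyc A$ is one of the 1-cycles whose union is $\Nrv_1 \mathcal{H}_1$, a second application of (snN3) gives $\Nrv_2 \mathcal{H}_1\ \sn\ \Nrv_1 \mathcal{H}_1$, and a final use of (snN1) produces $\Nrv_1 \mathcal{H}_1\ \sn\ \Nrv_2 \mathcal{H}_1$.

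For the converse ($\Rightarrow$), I would start from $\Nrv_1 \mathcal{H}_1\ \sn\ \Nrv_2 \mathcal{H}_1$ and invoke Lemma~\ref{prop:H1} to obtain $\cyc A\in \Nrv_1 \mathcal{H}_1$ and $\cyc B\in \Nrv_2 \mathcal{H}_1$ with $\cyc A\ \cap\ \cyc B\neq\emptyset$. The remaining task is to upgrade this bare nonempty intersection to strong contact. Here I would use the planar hypothesis: as in Example~\ref{ex:1cycles}, two 1-cycle regions on a triangulated $\sh A$ that meet share an arc (a common subpath) or a subregion lying in the interior of each, so $\Int(\cyc A)\ \cap\ \Int(\cyc B)\neq\emptyset$; axiom (snN4) then delivers $\cyc A\ \sn\ \cyc B$. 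More carefully, $\Nrv_1 \mathcal{H}_1\ \sn\ \Nrv_2 \mathcal{H}_1$ forces $\Int(\Nrv_1 \mathcal{H}_1)\ \cap\ \Int(\Nrv_2 \mathcal{H}_1)\neq\emptyset$ (neither nerve is a singleton or all of $X$), and since $\Int(\Nrv_1 \mathcal{H}_1)$ is open while $\Nrv_2 \mathcal{H}_1$ is a finite union of 1-cycle regions, the overlap point already lies in $\Int(\cyc A)\cap\Int(\cyc B)$ for some constituent pair, after which (snN4) applies; alternatively one may finish via Lemma~\ref{prop:2spoke}, which recovers strong contact from the shared homology-nerve complex.

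The step I expect to be the main obstacle is exactly this upgrade in the ($\Rightarrow$) direction: none of (snN0)--(snN6) turns a plain nonempty intersection into strong contact, so the argument must genuinely exploit the standing assumption that planar shapes --- and hence the 1-cycle regions on them --- have nonempty interiors, i.e. that the overlap guaranteed by Lemma~\ref{prop:H1} is ``thick'' rather than a single isolated boundary point. If that hypothesis were dropped, the biconditional could fail, so the planar-shape setting is doing real work in closing this gap.
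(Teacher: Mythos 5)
Your backward direction is fine under the paper's conventions and is in fact more explicit than anything the paper offers: the paper proves Theorem~\ref{thm:H1} with the single line ``immediate from Lemma~\ref{prop:H1}'', and Lemma~\ref{prop:H1} only concerns the forward passage from $\Nrv_1 \mathcal{H}_1\ \sn\ \Nrv_2 \mathcal{H}_1$ to intersecting constituent 1-cycles. Your double use of (snN3) together with (snN1), after identifying each nerve with the union of its 1-cycles and granting $\Int(\cyc A)\neq\emptyset$ (a convention the paper itself adopts in Example~\ref{ex:1cycles}), is a legitimate way to obtain the $(\Leftarrow)$ implication, which the paper never spells out.

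The genuine gap is in your $(\Rightarrow)$ direction, precisely at the step you yourself flag. Lemma~\ref{prop:H1} yields only $\cyc A\ \cap\ \cyc B\neq\emptyset$, and your upgrade to $\cyc A\ \sn\ \cyc B$ rests on the claim that two intersecting 1-cycles on a triangulated planar shape must share an arc or a subregion interior to both. That claim is false in general: two 1-cycles can meet in exactly one vertex (for instance two triangles of $K$ sharing a single 0-simplex), and then, by the paper's own gloss that for non-singletons $A\ \sn\ B \Leftrightarrow \Int A\ \cap\ \Int B\neq\emptyset$, strong contact fails even though the intersection is nonempty. Your ``more careful'' variant asserts that a point of $\Int(\Nrv_1 \mathcal{H}_1)\ \cap\ \Int(\Nrv_2 \mathcal{H}_1)$ already lies in $\Int(\cyc A)\ \cap\ \Int(\cyc B)$ for some constituent pair; this is not automatic, since the interior of a finite union can be strictly larger than the union of the interiors, and the finite-union argument that would repair it is not supplied. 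The alternative finish via Lemma~\ref{prop:2spoke} cannot work either: that lemma's implication runs from $\cyc A\ \sn\ \cyc B$ to $\cyc A\ \cap\ \cyc B\neq\emptyset$, never back. For comparison, the paper never closes this gap itself --- by citing Lemma~\ref{prop:H1} it implicitly treats strong contact of nerves as interchangeable with the existence of strongly near member cycles --- so your diagnosis of the obstacle is accurate, but your proposed patch does not establish the $(\Rightarrow)$ implication.
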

\begin{proof}
Immediate from Lemma~\ref{prop:H1}.
\end{proof}

\begin{corollary}
Let $\Nrv_1 \mathcal{H}_1,\Nrv_2 \mathcal{H}_1,\Nrv_3 \mathcal{H}_1$ be homology nerves for homology groups $H_1$ on a simplicial complex covering shape $\sh A$.
If $(\Nrv_1 \mathcal{H}_1\ \cup\ \Nrv_2 \mathcal{H}_1) \sn \Nrv_3 \mathcal{H}_1$, then $\Nrv_1 \mathcal{H}_1\ \sn\ \Nrv_3 \mathcal{H}_1$ or $\Nrv_2 \mathcal{H}_1\ \sn\ \Nrv_3 \mathcal{H}_1$ for the three homology nerves $\Nrv_1 \mathcal{H}_1,\Nrv_2 \mathcal{H}_1,\Nrv_3 \mathcal{H}_1$ on the homology groups $H_1$.
\end{corollary}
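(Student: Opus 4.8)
The plan is to collapse the union hypothesis down to a single shared $1$-cycle and then repackage that cycle through Theorem~\ref{thm:H1}. First I would apply axiom (snN2) to $(\Nrv_1 \mathcal{H}_1\ \cup\ \Nrv_2 \mathcal{H}_1)\ \sn\ \Nrv_3 \mathcal{H}_1$, obtaining $(\Nrv_1 \mathcal{H}_1\ \cup\ \Nrv_2 \mathcal{H}_1)\ \cap\ \Nrv_3 \mathcal{H}_1\neq \emptyset$. Distributing intersection over union rewrites the left side as $(\Nrv_1 \mathcal{H}_1\ \cap\ \Nrv_3 \mathcal{H}_1)\ \cup\ (\Nrv_2 \mathcal{H}_1\ \cap\ \Nrv_3 \mathcal{H}_1)$, so at least one of the two intersections is nonempty; without loss of generality (invoking (snN1) in the symmetric case) assume $\Nrv_1 \mathcal{H}_1\ \cap\ \Nrv_3 \mathcal{H}_1\neq \emptyset$, witnessed by a $1$-cycle $\cyc C$ belonging to both homology nerves.

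Next, since a homology nerve is a set of $1$-cycles (Def.~\ref{def:homologyNerve}) and $\cyc C\in \Nrv_1 \mathcal{H}_1$ while $\cyc C\in \Nrv_3 \mathcal{H}_1$, the cycle $\cyc C$ supplies the strongly-near witness demanded by Theorem~\ref{thm:H1}. Concretely, $\Int(\cyc C)\neq \emptyset$ since a planar $1$-cycle bounds a region with nonempty interior, so $\Int(\cyc C)\ \cap\ \Int(\cyc C)\neq \emptyset$ and (snN4) gives $\cyc C\ \sn\ \cyc C$ (alternatively this is read off the membership characterization of Lemma~\ref{prop:2spoke}). Then by Theorem~\ref{thm:H1}, from the strongly near pair $\cyc C\in \Nrv_1 \mathcal{H}_1$, $\cyc C\in \Nrv_3 \mathcal{H}_1$ we conclude $\Nrv_1 \mathcal{H}_1\ \sn\ \Nrv_3 \mathcal{H}_1$; in the symmetric case one gets $\Nrv_2 \mathcal{H}_1\ \sn\ \Nrv_3 \mathcal{H}_1$, which is the asserted disjunction.

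The step I expect to be the main obstacle is precisely this return trip from ``nerves with nonempty intersection'' to ``nerves in strong contact,'' because (snN2) runs only in one direction: a common point of two sets does not by itself force strong proximity. The argument must therefore avoid treating $\Nrv_1 \mathcal{H}_1\ \cup\ \Nrv_2 \mathcal{H}_1$ as a homology nerve (it need not satisfy the total-intersection condition of Def.~\ref{def:homologyNerve}, so Theorem~\ref{thm:H1} cannot be applied to it) and instead perform the conversion only on the component nerve that keeps the witness $\cyc C$, where Lemma~\ref{prop:2spoke} and Theorem~\ref{thm:H1} genuinely apply. The one small point to pin down carefully inside that step is the reflexive instance $\cyc C\ \sn\ \cyc C$, which I would justify from (snN4) together with the convention that $\Int(\cyc C)$ denotes the interior of the region bounded by the $1$-cycle.
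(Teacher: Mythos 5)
Your argument reaches the right conclusion and leans on the same engine as the paper, namely the biconditional in Theorem~\ref{thm:H1}, but it produces the strongly-near witness pair differently. The paper applies Theorem~\ref{thm:H1} directly to the pair $\left(\Nrv_1\mathcal{H}_1\cup\Nrv_2\mathcal{H}_1,\ \Nrv_3\mathcal{H}_1\right)$ to extract $\cyc A\ \sn\ \cyc B$ with $\cyc A$ in the union and $\cyc B\in\Nrv_3\mathcal{H}_1$, and then reads off the disjunction from the converse direction of Theorem~\ref{thm:H1} applied to whichever nerve contains $\cyc A$ (together with Lemma~\ref{prop:H1} for the intersection statement). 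You instead stay at the level of set intersection: (snN2), distributivity of $\cap$ over $\cup$, a shared $1$-cycle $\cyc C$, the reflexive instance $\cyc C\ \sn\ \cyc C$ via (snN4), and only then the converse of Theorem~\ref{thm:H1}. Your route has the virtue you yourself identify: it never treats $\Nrv_1\mathcal{H}_1\cup\Nrv_2\mathcal{H}_1$ as a homology nerve, whereas the paper's proof does apply Theorem~\ref{thm:H1} to that union even though the union need not satisfy Def.~\ref{def:homologyNerve}. The price you pay is the reading of nerve intersection: your witness $\cyc C$ is a common \emph{element} of the two nerves, which is the literal reading of Def.~\ref{def:homologyNerve} (nerves as subcollections of $\mathcal{H}_1$), but Lemma~\ref{prop:H1} suggests the paper also allows the point-set reading, under which $(\Nrv_1\mathcal{H}_1\cup\Nrv_2\mathcal{H}_1)\cap\Nrv_3\mathcal{H}_1\neq\emptyset$ yields only two possibly distinct cycles sharing points, and mere nonempty intersection would not give you the $\sn$ witness that Theorem~\ref{thm:H1} requires. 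So your proof is sound under the element-level interpretation (and your (snN4) justification of $\cyc C\ \sn\ \cyc C$ via the nonempty planar interior is the right way to avoid assuming that intersection implies strong contact), while the paper's shorter proof trades that care for a direct, if slightly informal, application of Theorem~\ref{thm:H1} to the union.
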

\begin{proof}
From Lemma~\ref{prop:H1}, $(\Nrv_1 \mathcal{H}_1\ \cup\ \Nrv_2 \mathcal{H}_1) \sn \Nrv_3 \mathcal{H}_1\Rightarrow (\Nrv_1 \mathcal{H}_1\ \cup\ \Nrv_2 \mathcal{H}_1)\ \cap\ \Nrv_2 \mathcal{H}_1\neq \emptyset$.   And, from Theorem~\ref{thm:H1},  $(\Nrv_1 \mathcal{H}_1\ \cup\ \Nrv_2 \mathcal{H}_1) \sn \Nrv_3 \mathcal{H}_1$ if and only if  $\cyc A\ \sn\ \cyc B$ for some $\cyc A\in (\Nrv_1 \mathcal{H}_1\ \cup\ \Nrv_2 \mathcal{H}_1)$ and $\cyc B\in \Nrv_3 \mathcal{H}_1$.   Hence, $\Nrv_1 \mathcal{H}_1\ \sn\ \Nrv_3 \mathcal{H}_1$ or $\Nrv_2 \mathcal{H}_1\ \sn\ \Nrv_3 \mathcal{H}_1$.
\end{proof}

\begin{figure}[!ht]
\centering
\begin{pspicture}
(-0.5,-2.5)(11,3)
\psframe[linewidth=2pt,framearc=.3](-0.8,-2.5)(11.8,2.5)
\rput(0.0,2.0){\textcolor{black}{\large $\boldsymbol{H_1}$}}
\pscurve[linecolor=black,arrowscale=2]{->}(4,0)(3,-1)(1,-1)(0,0)
\pscurve[linecolor=blue,arrowscale=2]{->}(0,0)(1,1)(3,1)(4,0)
\pscurve[linecolor=black,arrowscale=2]{<-}(3,1)(4,2)(4.5,1.5)
\pscurve[linecolor=black,arrowscale=2]{->}(4.5,1.5)(4,0)(1.8,0.2)(3,1)
\psdots[dotstyle=o, linewidth=1.2pt,linecolor = black, fillcolor = yellow]%
(0,0)(1,-1)(1,1)(1.8,0.2)(3,1)(3,-1)(1.8,0.2)(4,0)(4,2)(4.5,1.5)
\psdots[dotstyle=o, linewidth=1.2pt,linecolor = black, fillcolor = red]%
(3,1)(4,0)
\rput(4.0,2.25){\textcolor{black}{\large $\boldsymbol{v_7}$}}
\rput(4.75,1.5){\textcolor{black}{\large $\boldsymbol{v_8}$}}
\rput(2.7,1.25){\textcolor{black}{\large $\boldsymbol{v_3}$}}\rput(1,1.25){\textcolor{black}{\large $\boldsymbol{v_2}$}}
\rput(-0.25,0){\textcolor{black}{\large $\boldsymbol{v_1}$}}\rput(3,-1.25){\textcolor{black}{\large $\boldsymbol{v_4}$}}
\rput(0.0,0.75){\textcolor{black}{\large $\boldsymbol{\cyc A}$}}\rput(4.8,2.0){\textcolor{black}{\large $\boldsymbol{\cyc B}$}}
\rput(1,-1.25){\textcolor{black}{\large $\boldsymbol{v_5}$}}\rput(1.4,0.2){\textcolor{black}{\large $\boldsymbol{v_{9}}$}}
\rput(4.35,0.0){\textcolor{black}{\large $\boldsymbol{v_6}$}}
\pscurve[linecolor=black,arrowscale=2]{->}(10,0)(9,-1)(7,-1)(6,0)
\pscurve[linecolor=blue,arrowscale=2]{->}(6,0)(7,1)(9,1)(10,0)
\pscurve[linecolor=black,arrowscale=2]{->}(10,0)(10.5,-1.5)(10,-2)(9,-1)(9,1)
\psdots[dotstyle=o, linewidth=1.2pt,linecolor = black, fillcolor = yellow]%
(6,0)(7,1.0)(9,-1)(7,-1)(10,-2)(10.5,-1.5)
\psdots[dotstyle=o, linewidth=1.2pt,linecolor = black, fillcolor = red]%
(10,0)(9,1)
\rput(5.62,0){\textcolor{black}{\large $\boldsymbol{v_{10}}$}}\rput(7,1.25){\textcolor{black}{\large $\boldsymbol{v_{11}}$}}
\rput(9,1.25){\textcolor{black}{\large $\boldsymbol{v_{12}}$}}\rput(10.35,0.0){\textcolor{black}{\large $\boldsymbol{v_{13}}$}}
\rput(8.7,-1.265){\textcolor{black}{\large $\boldsymbol{v_{14}}$}}\rput(7,-1.25){\textcolor{black}{\large $\boldsymbol{v_{15}}$}}
\rput(6,0.75){\textcolor{black}{\large $\boldsymbol{\cyc C}$}}\rput(10.95,-0.8){\textcolor{black}{\large $\boldsymbol{\cyc D}$}}
\rput(10.75,-1.75){\textcolor{black}{\large $\boldsymbol{v_{17}}$}}\rput(10,-2.25){\textcolor{black}{\large $\boldsymbol{v_{16}}$}}
\end{pspicture}
\caption[]{$\cyc A\ \sn\ \cyc B$ and $\cyc A\ \dnear\ \cyc C$}
\label{fig:nearCycles}
\end{figure}

\subsection{Descriptive Proximity}\label{sec:dnear}
In the run-up to a close look at extracting features from shape complexes, we first consider descriptive proximities introduced in~\cite{Peters2013mcsintro}, fully covered in~\cite{DiConcilio2016descriptiveProximity} and briefly introduced, here.   
Descriptive proximities resulted from the introduction of the descriptive intersection of pairs of nonempty sets~\cite{Peters2013mcsintro},~\cite[\S 4.3, p. 84]{Naimpally2013}. 

\begin{description}
\item[{\rm\bf ($\boldsymbol{\Phi}$)}] $\Phi(A) = \left\{\Phi(x)\in\mathbb{R}^n: x\in A\right\}$, set of feature vectors.
\item[{\rm\bf ($\boldsymbol{\dcap}$)}]  $A\ \dcap\ B = \left\{x\in A\cup B: \Phi(x)\in \Phi(A) \& \in \Phi(x)\in \Phi(B)\right\}$.
\qquad \textcolor{blue}{$\blacksquare$}
\end{description}


Let $\Phi(x)$ be a feature vector for an arc $x$ in a simplicial complex on a planar shape.   For example, let $\Phi(x)$ be a feature vector representing single arc feature such as a Fourier descriptor in measuring the difference between arcs in a complex~\cite{PersoonFu1986PAMIFourierDescriptors} or uniform iso-curvature of arc along a curved edge~\cite[\S 2.2]{BenkhlifaGhorbel2017isoCurvature}.  For simplicity, we limit the description of an arc to the uniform iso-curvature of the arc between vertices in the curved edges of a 1-cycle such as those shown in Fig.~\ref{fig:nearCycles}.  $A\ \delta_{\Phi}\ B$ reads $A$ is descriptively near $B$, provided $\Phi(x) = \Phi(y)$ for at least one pair of points, $x\in A, y\in B$.  The proximity $\delta$ in the \u{C}ech, Efremovi\u c, and Wallman proximities is replaced by $\dnear$, which satisfies the following Descriptive Lodato Axioms from~\cite[\S 4.15.2]{Peters2014book}.

\begin{description}
\item[{\rm\bf (dP0)}] $\emptyset\ \dfar\ A, \forall A \subset X $.
\item[{\rm\bf (dP1)}] $A\ \dnear\ B \Leftrightarrow B\ \dnear\ A$.
\item[{\rm\bf (dP2)}] $A\ \dcap\ B \neq \emptyset \Rightarrow\ A\ \dnear\ B$.
\item[{\rm\bf (dP3)}] $A\ \dnear\ (B \cup C) \Leftrightarrow A\ \dnear\ B $ or $A\ \dnear\ C$.
\item[{\rm\bf (dP4)}] $A\ \dnear\ B$ and $\{b\}\ \dnear\ C$ for each $b \in B \ \Rightarrow A\ \dnear\ C$\ \mbox{({\bf Descriptive Lodato})}. \qquad \textcolor{blue}{$\blacksquare$}
\end{description}



\begin{proposition}\label{prop:dnear}{\rm~\cite[\S 2.2]{Peters2017arXiv1708-04147planarShapes}}
Let $\left(X,\dnear\right)$ be a descriptive proximity space, $A,B\subset X$.  Then $A\ \dnear\ B \Rightarrow A\ \dcap\ B\neq \emptyset$.
\end{proposition}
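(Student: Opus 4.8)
The plan is to reduce the statement to the definition of the descriptive proximity $\dnear$ that is in force here, namely the relation induced by the feature map $\Phi$: by construction $A\ \dnear\ B$ holds precisely when $\Phi(x) = \Phi(y)$ for at least one pair of points $x\in A$, $y\in B$ (this is exactly the reading of $A\ \dnear\ B$ recorded just above the Descriptive Lodato axioms). So the first step is to assume $A\ \dnear\ B$ and fix such a pair $x\in A$, $y\in B$ with $\Phi(x) = \Phi(y)$.

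The second step is to exhibit a witness for $A\ \dcap\ B \neq \emptyset$, and I claim $x$ itself works. Indeed $x\in A\subseteq A\cup B$; moreover $\Phi(x)\in\Phi(A)$ trivially since $x\in A$, and $\Phi(x) = \Phi(y)\in\Phi(B)$ since $y\in B$. Hence $x$ meets the defining condition of the descriptive intersection $A\ \dcap\ B = \left\{z\in A\cup B: \Phi(z)\in\Phi(A)\ \&\ \Phi(z)\in\Phi(B)\right\}$, so $x\in A\ \dcap\ B$ and this set is nonempty. (Equivalently, one could take $y$ as the witness.) Read together with axiom (dP2), which supplies the reverse implication $A\ \dcap\ B\neq\emptyset \Rightarrow A\ \dnear\ B$, this shows that for the $\Phi$-induced descriptive proximity the two conditions are in fact equivalent; the proposition records the half that is not axiomatic.

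The only point that needs care — and where a referee might object — is that the implication is \emph{false} for an arbitrary relation merely satisfying the Descriptive Lodato axioms (dP0)--(dP4); it is the concrete model $A\ \dnear\ B \Leftrightarrow (\exists\, x\in A,\ y\in B)\ \Phi(x) = \Phi(y)$ that makes the argument go through. So the write-up should invoke that model explicitly rather than leaning on the axioms alone. Apart from that, the proof is a one-line unwinding of the definitions of $\dnear$ and $\dcap$, with no estimates, induction, or case analysis required, which is why I would present it as a short corollary-style argument immediately after recalling the definition of the descriptive intersection.
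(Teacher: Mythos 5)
Your argument is correct, and it is in fact more informative than what the paper itself provides: for this proposition the paper offers no internal proof at all, only the citation to \cite[\S 2.2]{Peters2017arXiv1708-04147planarShapes}. Your one-line unwinding is exactly the right content: from $A\ \dnear\ B$ in the $\Phi$-induced sense fix $x\in A$, $y\in B$ with $\Phi(x)=\Phi(y)$; then $x\in A\subseteq A\cup B$, $\Phi(x)\in\Phi(A)$, and $\Phi(x)=\Phi(y)\in\Phi(B)$, so $x$ witnesses $A\ \dcap\ B\neq\emptyset$. Your cautionary remark is also well placed: the statement is the converse of axiom (dP2) and does not follow from the Descriptive Lodato axioms (dP0)--(dP4) alone, so the proof must invoke the concrete reading of $\dnear$ recorded in Section~\ref{sec:dnear} (``$\Phi(x)=\Phi(y)$ for at least one pair $x\in A$, $y\in B$''), which is precisely what you do. In short, your write-up supplies the self-contained justification the paper delegates to an external reference, and together with (dP2) it makes explicit that for the $\Phi$-induced proximity the conditions $A\ \dnear\ B$ and $A\ \dcap\ B\neq\emptyset$ are equivalent.
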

\begin{proof}
See~\cite[\S 2.2]{Peters2017arXiv1708-04147planarShapes} for the proof.
\end{proof}

Next, consider a proximal form of a Sz\'{a}z relator~\cite{Szaz1987}.  A \emph{proximal relator} $\mathscr{R}$ is a set of relations on a nonempty set $X$~\cite{Peters2016relator}.  The pair $\left(X,\mathscr{R}\right)$ is a proximal relator space.  The connection between $\sn$ and $\near$ is summarized in Prop.~\ref{thm:sn-implies-near}.

\begin{lemma}\label{thm:sn-implies-near}{\rm~\cite[\S 2.2]{Peters2017arXiv1708-04147planarShapes}}
Let $\left(X,\left\{\dnear,\sn\right\}\right)$ be a proximal relator space, $A,B\subset X$.  Then 
$A\ \sn\ B \Rightarrow A\ \dnear\ B$.
\end{lemma}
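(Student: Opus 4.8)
The plan is to chain together the strong‑proximity axioms, the definition of the descriptive intersection, and the descriptive Lodato axioms, in that order. The key observation is that a strong proximity always forces an ordinary (spatial) nonempty intersection, and an ordinary nonempty intersection is automatically a \emph{descriptive} nonempty intersection, because any point common to $A$ and $B$ has a description lying in both $\Phi(A)$ and $\Phi(B)$.

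First I would invoke axiom (snN2): from $A\ \sn\ B$ we obtain $A\ \cap\ B\neq\emptyset$, so there is a point $x\in A\cap B$. Next I would feed this point into the definition $(\dcap)$ of the descriptive intersection: since $x\in A$ we have $\Phi(x)\in\Phi(A)$, and since $x\in B$ we have $\Phi(x)\in\Phi(B)$, so by definition $x\in A\ \dcap\ B$; hence $A\ \dcap\ B\neq\emptyset$. Finally I would apply the Descriptive Lodato axiom (dP2), which states $A\ \dcap\ B\neq\emptyset\Rightarrow A\ \dnear\ B$, to conclude $A\ \dnear\ B$. This gives the implication $A\ \sn\ B\Rightarrow A\ \dnear\ B$, and since both $\sn$ and $\dnear$ belong to the relator $\{\dnear,\sn\}$ on $X$, the statement holds in the proximal relator space $\left(X,\left\{\dnear,\sn\right\}\right)$.

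There is essentially no hard step here: the argument is a short deduction from axioms already stated in the excerpt (specifically (snN2), the definition of $\dcap$, and (dP2)). The only point that deserves a word of care is the inclusion $A\cap B\subseteq A\ \dcap\ B$ — i.e.\ that ordinary common points are descriptive common points — which is immediate from $(\Phi)$ and $(\dcap)$ since a single point cannot have two different feature vectors. One could also phrase the last step via Proposition~\ref{prop:dnear} in reverse, but it is cleaner to use (dP2) directly. Thus the ``main obstacle'' is merely bookkeeping: making sure the point produced by (snN2) is the same point witnessing $A\ \dcap\ B\neq\emptyset$.
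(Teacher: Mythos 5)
Your proof is correct: (snN2) yields a common point $x\in A\cap B$, that same point witnesses $A\ \dcap\ B\neq\emptyset$ since $\Phi(x)\in\Phi(A)$ and $\Phi(x)\in\Phi(B)$, and (dP2) then gives $A\ \dnear\ B$. The paper itself does not spell out a proof here --- it simply defers to the cited reference \cite[\S 2.2]{Peters2017arXiv1708-04147planarShapes} --- and your argument is exactly the standard deduction from the axioms stated in this paper, so there is nothing further to reconcile.
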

\begin{proof}
See~\cite[\S 2.2]{Peters2017arXiv1708-04147planarShapes} for the proof.
\end{proof}

\begin{example}{\bf Descriptively Near 1-Cycles in $\boldsymbol{H_1}$}.
Let $\cyc A, \cyc B, \cyc C, \cyc D$ in Fig.~\ref{fig:nearCycles} be 1-cycles in a collection of homology groups $\mathcal{H}_1$ on a simplicial complex covering a planar shape.    Further, for example, let 
\[
\Phi(\cyc A) = \left\{\Phi(\arc{vv'})\in \cyc A: \Phi(\arc{vv'}) =\ \mbox{uniform iso-curvature of}\ \arc{vv'}\right\}.
\]
Let $\mathcal{H}_1$ be equipped with the relator $\left\{\sn,\dnear\right\}$. Then observe
\begin{compactenum}[1$^o$]
\item In Fig.~\ref{fig:nearCycles}, 
\begin{align*}  
\cyc A\ \mbox{has}\  &\overbrace{[v_1,v_2,v_3,v_4,v_5,v_6]}^{\textcolor{blue}{\mbox{$\cyc A$\ vertices}}}:
                                                                 v_1\rightarrow v_2\rightarrow v_3\rightarrow v_6\rightarrow v_4\rightarrow v_5\rightarrow v_1.\\
\cyc B\ \mbox{has}\   &\overbrace{[v_3,v_6,v_7,v_8]}^{\textcolor{blue}{\mbox{$\cyc B$\ vertices}}}:
                                                                 v_3\rightarrow v_6\rightarrow v_8\rightarrow v_7\rightarrow v_3.\\
\mbox{edge}\ \arc{v_3v_6} &\ \in \Int(\cyc A)\ \mbox{and}\ \arc{v_3v_6}\ \in \Int(\cyc B), \mbox{\em i.e.},\Int(\cyc A)\cap\Int(\cyc B)\neq\emptyset.
\end{align*}
Consequently, from Axiom (snN4), $\cyc A\ \sn\ \cyc B$ and from Lemma~\ref{thm:sn-implies-near}, $\cyc A\ \dnear\ \cyc B$.   Hence, from Proposition~\ref{prop:dnear}, $\cyc A\ \dcap\ \cyc B\neq \emptyset$.
\item In Fig.~\ref{fig:nearCycles}, 
\begin{align*} 
\cyc C\ \mbox{has}\  & \overbrace{[v_{10},v_{11},v_{12},v_{13},v_{14},v_{15}]}^{\textcolor{blue}{\mbox{$\cyc C$\ vertices}}}:\\
                                                                 &v_{10}\rightarrow v_{11}\rightarrow v_{12}\rightarrow v_{13}\rightarrow v_{14}\rightarrow v_{15}\rightarrow v_{10}.\qquad\qquad\\ 
\cyc D\ \mbox{has}\   &\overbrace{[v_{12},v_{13},v_{17},v_{16},v_{14}]}^{\textcolor{blue}{\mbox{$\cyc D$\ vertices}}}:
                                                                 v_{12}\rightarrow v_{13}\rightarrow v_{17}\rightarrow v_{16}\rightarrow v_{14}\rightarrow v_{12}.\\  
\cyc A\ \dnear\ \cyc C\ \mbox{, since}\  &\overbrace{\Phi(\arc{v_3v_6}) = \Phi(\arc{v_{12},v_{13}}).}^{\textcolor{blue}{\mbox{arcs have matching uniform iso-curvature}}}
\end{align*}
Consequently, $\cyc A\ \dcap\ \cyc C\neq \emptyset$.  From Axiom (dP2), $\cyc A\ \dnear\ \cyc C$.    Hence, from Proposition~\ref{prop:dnear}, the converse also holds, {\em i.e.}, 
\[
\cyc A\ \dnear\ \cyc C \Rightarrow \cyc A\ \dcap\ \cyc C\neq \emptyset.
\]
In other words, the 1-cycles $\cyc A, \cyc C$ in homology groups $\mathcal{H}_1$ represented in Fig.~\ref{fig:nearCycles} have descriptive proximity, since $\cyc A, \cyc C$ have curved edges with the same uniform iso-curvature.
\qquad \textcolor{blue}{\Squaresteel}
\end{compactenum}
\end{example}

 Let $2^{2^{\mathcal{H}_1}}$ denote a collection of sub-collections of 1-cycles of  $\mathcal{H}_1$.

\begin{theorem}\label{thm:spoke}
Let $\left(\mathcal{H}_1,\left\{\dnear,\sn\right\}\right)$ be a collection of homology groups endowed with a proximal relator and let 1-cycles $\cyc A, \cyc B\in \mathcal{H}_1$, homology nerves $\Nrv_1 \mathcal{H}_1,\Nrv_2 \mathcal{H}_1\in 2^{2^{\mathcal{H}_1}}$.  Then
\begin{compactenum}[1$^o$]
\item $\Nrv_1 \mathcal{H}_1\ \sn\ \Nrv_2 \mathcal{H}_1$ implies $\Nrv_1 \mathcal{H}_1\ \dnear\ \Nrv_2 \mathcal{H}_1$.
\item A 1-cycle $\cyc A\ \in\ \Nrv_1 \mathcal{H}_1\cap \Nrv_2 \mathcal{H}_1$ implies $\cyc A\ \in\ \Nrv_1 \mathcal{H}_1\ \dcap\ \Nrv_2 \mathcal{H}_1$.
\item $\cyc A\ \sn\ \cyc B\rightarrow \cyc A\ \dnear\ \cyc B$.
\end{compactenum}
\end{theorem}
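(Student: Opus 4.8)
The plan is to dispatch parts $1^o$ and $3^o$ together, since each is a direct instance of the implication $\sn \Rightarrow \dnear$ that holds in any proximal relator space, and then to settle part $2^o$ by unwinding the definition of the descriptive intersection $\dcap$.

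For $3^o$ I would simply invoke Lemma~\ref{thm:sn-implies-near} with the underlying set taken to be $\mathcal{H}_1$ and the two subsets taken to be the 1-cycles $\cyc A$ and $\cyc B$: since $\left(\mathcal{H}_1,\left\{\dnear,\sn\right\}\right)$ carries exactly the relator $\left\{\dnear,\sn\right\}$, that lemma yields $\cyc A\ \sn\ \cyc B \Rightarrow \cyc A\ \dnear\ \cyc B$ with nothing further to check. For $1^o$ the same lemma applies verbatim, now with the two subsets taken to be the homology nerves $\Nrv_1 \mathcal{H}_1$ and $\Nrv_2 \mathcal{H}_1$; the only point to note is that, by Definition~\ref{def:homologyNerve}, each homology nerve is itself a set of 1-cycles, hence a subset of $\mathcal{H}_1$, so it is a legitimate argument for the proximity relations.

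For $2^o$ I would argue directly from axiom $(\boldsymbol{\dcap})$ of Section~\ref{sec:dnear}. Assuming $\cyc A\in \Nrv_1 \mathcal{H}_1 \cap \Nrv_2 \mathcal{H}_1$, one has trivially $\cyc A\in \Nrv_1 \mathcal{H}_1 \cup \Nrv_2 \mathcal{H}_1$; moreover, because $\cyc A\in \Nrv_1 \mathcal{H}_1$ its feature vector lies in $\Phi(\Nrv_1 \mathcal{H}_1)$, and because $\cyc A\in \Nrv_2 \mathcal{H}_1$ it also lies in $\Phi(\Nrv_2 \mathcal{H}_1)$. These are precisely the membership conditions defining $\Nrv_1 \mathcal{H}_1 \dcap \Nrv_2 \mathcal{H}_1$, so $\cyc A\in \Nrv_1 \mathcal{H}_1 \dcap \Nrv_2 \mathcal{H}_1$.

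There is no serious obstacle in any of the three parts; the content is carried entirely by Lemma~\ref{thm:sn-implies-near} (whose proof is cited from earlier work) together with the bookkeeping observation that membership of a 1-cycle in a collection of 1-cycles forces its description into the description of that collection, so ordinary set intersection is always subsumed by descriptive intersection. The single place where care is needed is keeping straight, in $1^o$ and $2^o$, that the objects on which $\sn$, $\dnear$ and $\dcap$ act are the nerves as elements of $2^{2^{\mathcal{H}_1}}$, whereas in $3^o$ the objects are the 1-cycles as elements of $\mathcal{H}_1$; once the ambient set is fixed correctly in each case the earlier results apply with no modification.
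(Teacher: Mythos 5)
Your proposal is correct. Parts $1^o$ and $3^o$ are handled exactly as in the paper: both are immediate applications of Lemma~\ref{thm:sn-implies-near}, and your remark that a homology nerve is a set of 1-cycles, hence a legitimate subset of $\mathcal{H}_1$ on which the relator acts, is the right bookkeeping. Where you genuinely diverge is part $2^o$. The paper's own argument there does not unwind the definition of $\dcap$; instead it assumes $\Nrv_1 \mathcal{H}_1\ \sn\ \Nrv_2 \mathcal{H}_1$, asserts the membership $\cyc A\in \Nrv_1 \mathcal{H}_1\ \dcap\ \Nrv_2 \mathcal{H}_1$ without further justification, and then appeals to Lemma~\ref{prop:2spoke} to conclude $\Nrv_1 \mathcal{H}_1\ \dnear\ \Nrv_2 \mathcal{H}_1$, which is a statement about the nerves rather than the membership claim actually asserted in $2^o$. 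Your route — from $\cyc A\in \Nrv_1 \mathcal{H}_1\cap \Nrv_2 \mathcal{H}_1$ deduce $\cyc A\in \Nrv_1 \mathcal{H}_1\cup \Nrv_2 \mathcal{H}_1$ together with $\Phi(\cyc A)\in \Phi(\Nrv_1 \mathcal{H}_1)$ and $\Phi(\cyc A)\in \Phi(\Nrv_2 \mathcal{H}_1)$, which are precisely the defining conditions of the descriptive intersection — is a direct verification that ordinary intersection is subsumed by $\dcap$, needs no extra hypothesis on the nerves, and proves exactly the implication stated. In short, your $2^o$ is more elementary and more faithful to the statement, while the paper's version folds in an additional nearness conclusion at the cost of leaving the central membership step implicit.
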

\begin{proof}$\mbox{}$\\
1$^o$: Immediate from Lemma~\ref{thm:sn-implies-near}.\\
2$^o$: Let $cyc A\in \mathcal{H}_1$.   $\cyc A\ \in\ \Nrv_1 \mathcal{H}_1\cap \Nrv_2 \mathcal{H}_1$, provided $\Nrv_1 \mathcal{H}_1\ \sn\ \Nrv_2 \mathcal{H}_1$.   Then  $\cyc A\ \in\ \Nrv_1 \mathcal{H}_1\ \dcap\ \Nrv_2 \mathcal{H}_1$.   Hence, from  Prop.~\ref{prop:2spoke},  $\Nrv_1 \mathcal{H}_1\ \dnear\ \Nrv_2 \mathcal{H}_1$.\\
3$^o$: Immediate from Lemma~\ref{thm:sn-implies-near}.
\end{proof}

\begin{corollary}
Let $\left(\mathcal{H}_1,\left\{\dnear,\sn\right\}\right)$ be a collection of homology groups endowed with proximal relator, homology nerves $\Nrv_1 \mathcal{H}_1,\Nrv_2 \mathcal{H}_1\in 2^{2^{\mathcal{H}_1}}$ with $\Nrv_2 \mathcal{H}_1$ on shape $\sh B$.  Then
\begin{compactenum}[1$^o$]
\item $\Nrv_1 \mathcal{H}_1\ \sn\ \Nrv_2 \mathcal{H}_1$ implies $\Nrv_1 \mathcal{H}_1\ \dnear\ \sh B$.
\item $\Nrv_1 \mathcal{H}_1\cap \Nrv_2 \mathcal{H}_1\neq \emptyset$ implies $\Nrv_1 \mathcal{H}_1\ \dcap\ \Nrv_2 \mathcal{H}_1$.
\end{compactenum}
\end{corollary}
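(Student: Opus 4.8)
The plan is to derive both parts from Theorem~\ref{thm:spoke}, Lemma~\ref{thm:sn-implies-near}, and the Descriptive Lodato axioms (dP2)--(dP3), using only the elementary observation that a homology nerve $\Nrv_2 \mathcal{H}_1$ is a set of $1$-cycles whose underlying edges are carried by the simplicial complex covering the shape $\sh B$, so that, as a point set, $\Nrv_2 \mathcal{H}_1 \subseteq \sh B$. Before starting I would fix one consistent reading of the mixed notation ``$\Nrv_1 \mathcal{H}_1\ \dnear\ \sh B$'': namely, descriptive nearness between the point set carried by $\Nrv_1 \mathcal{H}_1$ and the point set $\sh B$.

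For part 1$^o$, I would first apply Theorem~\ref{thm:spoke}(1$^o$) to pass from $\Nrv_1 \mathcal{H}_1\ \sn\ \Nrv_2 \mathcal{H}_1$ to $\Nrv_1 \mathcal{H}_1\ \dnear\ \Nrv_2 \mathcal{H}_1$. Since each $\cyc B\in \Nrv_2 \mathcal{H}_1$ is a simple closed connected path of edges lying inside the complex covering $\sh B$, the union of these cycles is contained in $\sh B$, i.e. $\Nrv_2 \mathcal{H}_1\subseteq \sh B$ as point sets. Monotonicity of $\dnear$ in its second argument --- a consequence of axiom (dP3), since descriptive nearness to a subset is inherited by any superset --- then yields $\Nrv_1 \mathcal{H}_1\ \dnear\ \sh B$. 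The only care needed is to justify that the feature-vector description of a point of $\Nrv_2 \mathcal{H}_1$ is, by definition $(\boldsymbol{\Phi})$, also a member of $\Phi(\sh B)$, which is immediate once the realization of the nerve as a subset of $\sh B$ is fixed.

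For part 2$^o$, assume $\Nrv_1 \mathcal{H}_1\cap \Nrv_2 \mathcal{H}_1\neq \emptyset$, so there is a $1$-cycle $\cyc A$ belonging to both nerves. By Theorem~\ref{thm:spoke}(2$^o$), $\cyc A\in \Nrv_1 \mathcal{H}_1\ \dcap\ \Nrv_2 \mathcal{H}_1$; concretely, $\cyc A\in \Nrv_1 \mathcal{H}_1\cap \Nrv_2 \mathcal{H}_1$ forces $\Phi(\cyc A)\in \Phi(\Nrv_1 \mathcal{H}_1)\cap \Phi(\Nrv_2 \mathcal{H}_1)$ directly from definition $(\boldsymbol{\dcap})$, so the descriptive intersection is nonempty. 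Axiom (dP2) then gives $\Nrv_1 \mathcal{H}_1\ \dnear\ \Nrv_2 \mathcal{H}_1$, and the non-emptiness of $\Nrv_1 \mathcal{H}_1\ \dcap\ \Nrv_2 \mathcal{H}_1$ is precisely the asserted conclusion (read as ``$\Nrv_1 \mathcal{H}_1\ \dcap\ \Nrv_2 \mathcal{H}_1$ holds'', i.e. is nonempty), so part 2$^o$ follows.

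The main obstacle is not mathematical but notational: the statement conflates the combinatorial object $\Nrv_2 \mathcal{H}_1\in 2^{2^{\mathcal{H}_1}}$ (a set of $1$-cycles) with its point-set realization and with the ambient shape $\sh B$. Once a single coherent interpretation is adopted, both parts collapse to the already-proved Theorem~\ref{thm:spoke} together with the monotonicity built into axioms (dP2)--(dP3); no new estimates, constructions, or case analysis are required.
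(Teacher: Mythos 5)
Your proposal is correct and takes essentially the approach the paper intends: the corollary is stated there without an explicit proof, as an immediate consequence of Theorem~\ref{thm:spoke}, and your argument supplies precisely the implicit steps --- Theorem~\ref{thm:spoke} part~$1^o$ together with the containment of $\Nrv_2\mathcal{H}_1$ in $\sh B$ and axiom (dP3) for part~$1^o$, and Theorem~\ref{thm:spoke} part~$2^o$ (equivalently, the general fact that $A\cap B\subseteq A\ \dcap\ B$) for part~$2^o$. Your explicit remark that the nerve must be read via its point-set realization inside $\sh B$ is a reasonable clarification of the paper's mixed notation, not a deviation from its route.
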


\subsection{Descriptive Homology Nerves and Shape Signature}\label{sec:descriptiveCW}
This section introduces descriptive homology nerves and the components of a shape signature.

\begin{definition}\label{def:descriptiveHomologyNerve}
Let $K$ be a simplicial complex covering a shape $\sh A$ and let $\mathcal{H}_1$ be the collection of 1-cycles in homology groups $H_1$ on $K$.   A \emph{descriptive homology nerve} on $\mathcal{H}_1(K)$  (denoted by $\Nrv_{\Phi} \mathcal{H}_1$) is defined by
\[
\Nrv_{\Phi} \mathcal{H}_1= \left\{\cyc A\in \mathcal{H}_1: \Dcap \cyc A\neq \emptyset\right\}\ \mbox{\rm ({\bf Descriptive Homology Nerve}).\qquad \textcolor{blue}{\Squaresteel}}
\]
The \emph{nucleus} of a descriptive homology nerve is any member $\cyc A\in \Nrv_{\Phi} \mathcal{H}_1$ that serves as a representative of the nerve inasmuch as $\cyc A$ defines a cluster $X$ that contains all 1-cycles $\cyc B$ such that $\cyc A\ \dnear\ \cyc B$.
\qquad \textcolor{blue}{\Squaresteel}
\end{definition}


\begin{theorem}\label{thm:descriptiveHomologyNerve}
Let $K$ be a simplicial complex covering a finite, bounded planar shape, $\mathcal{H}_1$ a collection of homology groups on $K$, and $\Phi(\mathcal{H}_1)$ a set of descriptions of the 1-cycles in $\mathcal{H}_1$. 
Every member of  $\Phi(\mathcal{H}_1)$ is the nucleus of a descriptive homology nerve $\Nrv_{\Phi} \mathcal{H}_1$.
\end{theorem}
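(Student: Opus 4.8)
The plan is to fix an arbitrary $\Phi(\cyc A)\in\Phi(\mathcal{H}_1)$, construct from the 1-cycle $\cyc A$ a concrete subcollection of $\mathcal{H}_1$ that meets Definition~\ref{def:descriptiveHomologyNerve}, and then read off that $\cyc A$ is its nucleus. The first step is to observe that a 1-cycle is a nonempty simple closed path, so it carries at least one oriented edge (arc) and hence $\Phi(\cyc A)\neq\emptyset$; moreover $\cyc A\ \dcap\ \cyc A=\cyc A\neq\emptyset$, so by Axiom (dP2) we get $\cyc A\ \dnear\ \cyc A$, i.e.\ a 1-cycle is descriptively near itself. This reflexivity is what lets $\cyc A$ sit inside the nerve it generates.

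Second, I would set $X\assign\{\cyc B\in\mathcal{H}_1:\cyc A\ \dnear\ \cyc B\}$, the descriptive cluster determined by $\cyc A$; by the first step $\cyc A\in X$. To certify $X$ as a descriptive homology nerve I must show its descriptive intersection $\Dcap$, taken over the members of $X$, is nonempty. Using Proposition~\ref{prop:dnear}, $\cyc A\ \dnear\ \cyc B\Rightarrow\cyc A\ \dcap\ \cyc B\neq\emptyset$ for every $\cyc B\in X$, so each $\cyc B$ shares an arc description with $\cyc A$; fixing a single feature $\phi\in\Phi(\cyc A)$ that witnesses this for the whole cluster, the set $\{x:\Phi(x)=\phi\}$ lies in the descriptive intersection of all members of $X$, which is therefore nonempty. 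Hence $X=\Nrv_{\Phi}\mathcal{H}_1$ in the sense of Definition~\ref{def:descriptiveHomologyNerve}.

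Third, I would verify the nucleus condition verbatim from the definition: $\cyc A\in X=\Nrv_{\Phi}\mathcal{H}_1$, and by the very construction of $X$ it contains every 1-cycle $\cyc B$ with $\cyc A\ \dnear\ \cyc B$, so $\cyc A$ is a representative that determines the cluster $X$. Thus $\cyc A$ is the nucleus of $\Nrv_{\Phi}\mathcal{H}_1$, and since $\Phi(\cyc A)$ was an arbitrary element of $\Phi(\mathcal{H}_1)$, every member of $\Phi(\mathcal{H}_1)$ is the nucleus of a descriptive homology nerve.

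The step I expect to be the real obstacle is the nonemptiness of the descriptive intersection of the \emph{entire} cluster $X$. Pairwise descriptive nearness to $\cyc A$ only supplies, for each $\cyc B\in X$, \emph{some} shared arc-feature, and a priori different $\cyc B$'s could match $\cyc A$ through different features, so no single arc need survive across all of $X$. There are two natural ways to pin this down: (i) invoke the Descriptive Lodato Axiom (dP4) to propagate one chosen witnessing feature $\phi\in\Phi(\cyc A)$ through the cluster, or (ii), closer to the paper's running iso-curvature example, define the nerve at the outset as $X_\phi=\{\cyc B\in\mathcal{H}_1:\phi\in\Phi(\cyc B)\}$ for a fixed $\phi\in\Phi(\cyc A)$, in which case $\Dcap X_\phi\supseteq\{x:\Phi(x)=\phi\}\neq\emptyset$ is immediate and $\cyc A\in X_\phi$ is its nucleus. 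In either case a short argument is still needed that the constructed nerve really does absorb all $\cyc B$ with $\cyc A\ \dnear\ \cyc B$ (for route (ii) this forces the mild hypothesis that matchings happen through a common feature), and that is the one place where the proof has to be handled with care.
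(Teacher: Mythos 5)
Your construction is essentially the paper's own proof: it likewise starts from the reflexivity $\cyc A\ \dnear\ \cyc A$, forms the cluster $X=\{\cyc B\in\mathcal{H}_1:\cyc A\ \dnear\ \cyc B\}$, and declares $X$ a descriptive homology nerve with nucleus $\cyc A$. The obstacle you flag --- that pairwise nearness to $\cyc A$ only yields, for each $\cyc B$, \emph{some} shared feature, so the descriptive intersection over all of $X$ is not automatically nonempty --- is not addressed in the paper either, whose proof simply asserts via Definition~\ref{def:descriptiveHomologyNerve} that $X$ is a descriptive homology nerve; your repair through a fixed witnessing feature $\phi$ (or via (dP4)) is therefore a tightening of the published argument rather than a departure from it.
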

\begin{proof}$\mbox{}$\\
By definition, $\Phi(\mathcal{H}_1) = \left\{\Phi(\cyc A): \cyc A\in \mathcal{H}_1\right\}$.
Let $\Phi(\cyc A)\in\Phi(\mathcal{H}_1)$.   Since $\cyc A\ \dnear\ \cyc A$, then, from Def.~\ref{def:descriptiveHomologyNerve},
$\cyc A$ is the nucleus of a descriptive nerve $\Nrv_{\Phi} \mathcal{H}_1$ containing one cycle, namely, $\cyc A$.    Let
\[
X = \left\{\cyc B\in \mathcal{H}_1: \cyc A\ \dnear\ \cyc B\neq \emptyset\right\}.
\]
Hence,  by  Def.~\ref{def:descriptiveHomologyNerve}, $X$ is a descriptive homology nerve and $\cyc A$ is the nucleus of the nerve $X$, {\em i.e.}, $\cyc A\ \dnear\ \cyc B$ for every member $\cyc B\in X$.  
\end{proof}

\begin{example}$\mbox{}$\\
Let the collection of homology groups $\mathcal{H}_1$ be represented the 1-cycles $\cyc A, \cyc B, \cyc C, \cyc D$ in Fig.~\ref{fig:nearCycles}.   Let uniform iso-curvature be used to describe a 1-cycle in $\mathcal{H}_1$.   Notice that curved edge $\arc{v_3v_6}\in \cyc B$ has the same uniform iso-curvature as $\arc{v_{12}v_{13}}\in \cyc C$ and $\arc{v_{12}v_{13}}\in \cyc D$.    Hence,
\[
\Nrv_{\Phi} \mathcal{H}_1= \left\{\cyc B, \cyc C, \cyc D\in \mathcal{H}_1\right\}\ \mbox{\rm ({\bf Descriptive Homology Nerve})},
\]
since
\[
\mathop{\bigcap}\limits_{\substack{\cyc X\in\\
                                                        \left\{\cyc B, \cyc C, \cyc D\right\}}}\cyc X\neq \emptyset.
\]
From Theorem~\ref{thm:descriptiveHomologyNerve}, $\cyc B$ is the nucleus of $\Nrv_{\Phi} \mathcal{H}_1$.
\qquad \textcolor{blue}{\Squaresteel}
\end{example}

\begin{conjecture}
Every finite, bounded, planar shape with a decomposition and with at least one hole contains
a descriptive homology nerve that intersects with the boundary of a hole.
\qquad \textcolor{blue}{\Squaresteel}
\end{conjecture}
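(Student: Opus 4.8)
\smallskip
\noindent\textbf{Proof proposal.}
Since a hole boundary $\partial H_o$ is by definition excluded from $\mathcal{H}_1$, the $1$-cycle that witnesses the nerve cannot be $\partial H_o$ itself; the plan is to manufacture a genuine member of $\mathcal{H}_1$ that touches $\partial H_o$, and then to use Theorem~\ref{thm:descriptiveHomologyNerve} to upgrade it to the nucleus of a descriptive homology nerve, which then inherits the contact with $\partial H_o$.

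First I would fix a hole $H_o$ of the shape $\sh A$ and write its boundary $1$-cycle $\partial H_o$ as a simple closed connected path of $1$-simplexes $e_1,\dots,e_k$ with $k\geq 3$ (a decomposition of a region with nonempty interior cannot surround a hole with fewer than three edges). Singling out one boundary edge $e_i=[u,v]$ and using that $\sh A$ lies on the shape side of $e_i$, the covering complex $K$ contains a $2$-simplex $\sigma=[u,v,w]$ with $e_i$ as a face. Replacing $e_i$ by the other two edges $[u,w],[w,v]$ of $\sigma$ gives a path
\[
\cyc A\assign\big(\partial H_o\setminus\{e_i\}\big)\cup\{[u,w],[w,v]\},
\]
which (assuming the apex $w$ is not already a vertex of $\partial H_o$, so that no vertex of $\cyc A$ is repeated) is again a simple closed connected path encircling $H_o$ and which bounds the region $H_o$ together with the filled triangle $\sigma$. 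That region is not a hole, so $\cyc A$ is not the boundary of any hole and hence $\cyc A\in\mathcal{H}_1$.

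Next I would record that $\cyc A$ and $\partial H_o$ share the $k-1\geq 2$ edges $e_1,\dots,e_{i-1},e_{i+1},\dots,e_k$, so $\cyc A\ \cap\ \partial H_o\neq\emptyset$; as their enclosed interiors both contain $H_o$, axiom (snN4) gives $\cyc A\ \sn\ \partial H_o$, while any shared edge $e_j$ satisfies $\Phi(e_j)\in\Phi(\cyc A)\cap\Phi(\partial H_o)$, so $\cyc A\ \dcap\ \partial H_o\neq\emptyset$ and, by axiom (dP2), $\cyc A\ \dnear\ \partial H_o$. Finally, by Theorem~\ref{thm:descriptiveHomologyNerve}, $\cyc A$ is the nucleus of a descriptive homology nerve
\[
\Nrv_{\Phi}\mathcal{H}_1=\left\{\cyc B\in\mathcal{H}_1:\cyc A\ \dnear\ \cyc B\right\}\ni\cyc A.
\]
Since its member $\cyc A$ meets $\partial H_o$ both spatially and descriptively, the nerve $\Nrv_{\Phi}\mathcal{H}_1$ intersects the boundary of the hole $H_o$, which is what is claimed.

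The step I expect to be the main obstacle is the geometric one: showing, for an arbitrary admissible decomposition, that each hole boundary carries a shape-side $2$-simplex along at least one of its edges, and that the edge-flip cycle $\cyc A$ stays simple and inside $\mathcal{H}_1$ in all configurations (very coarse decompositions, a hole bounded by a single missing triangle, or a hole whose boundary abuts the outer contour, where the apex $w$ might coincide with an existing boundary vertex and pinch $\cyc A$). Once that is settled, the passage from spatial to descriptive contact and from a single $1$-cycle to the nerve it nucleates is immediate from axiom (dP2) together with Lemma~\ref{thm:sn-implies-near} and Theorem~\ref{thm:descriptiveHomologyNerve}.
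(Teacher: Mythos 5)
This statement is one of the paper's \emph{conjectures}: the author states it without proof, so there is no argument of the paper to compare yours against, and your proposal has to stand entirely on its own. As it stands it does not, for two reasons.

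First, the membership claim ``that region is not a hole, so $\cyc A$ is not the boundary of any hole and hence $\cyc A\in\mathcal{H}_1$'' is the step that actually carries the proof, and it is not justified within the paper's framework. Your flipped path differs from $\partial H_o$ exactly by $\partial\sigma$ with $\sigma=[u,v,w]$ a filled $2$-simplex, i.e.\ by an element of $\mbox{img}\,\partial_2=B_1$; so in $H_1=Z_1/B_1$ your $\cyc A$ lies in the same coset as $\partial H_o$. The paper treats the hole boundary as something to be factored out (Example~\ref{ex:BettiNumber} places $\partial H_o$ in $B_1$) and describes a $1$-cycle of $\mathcal{H}_1$ as a simple closed path whose edges ``are not boundaries of holes''; your $\cyc A$ retains $k-1$ of the $k$ hole-boundary edges, so under that reading it is either excluded from $\mathcal{H}_1$ outright or is identified with the very class that was quotiented away. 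Conversely, under the standard homological reading $\partial H_o$ is itself a nontrivial generator of $H_1$ and no flip is needed, so the construction is moot. Either way, you have not produced a certified member of $\mathcal{H}_1$ touching the hole boundary, which is precisely what the conjecture asks for.

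Second, the geometric step you yourself flag --- existence, for an \emph{arbitrary} admissible decomposition, of a shape-side $2$-simplex along some edge of $\partial H_o$ whose apex $w$ avoids $\partial H_o$ (and the outer contour), so that the flipped path stays simple --- is not a peripheral technicality: together with the membership issue it \emph{is} the content of the conjecture, and deferring it leaves the argument incomplete in exactly the coarse configurations (triangle-bounded holes, holes abutting the contour) where a counterexample would live. The remaining steps are fine once those two gaps are closed: a shared edge does give $\cyc A\ \cap\ \partial H_o\neq\emptyset$ and $\cyc A\ \dcap\ \partial H_o\neq\emptyset$, hence $\cyc A\ \dnear\ \partial H_o$ by (dP2), and Theorem~\ref{thm:descriptiveHomologyNerve} then makes $\cyc A$ the nucleus of a descriptive homology nerve meeting the hole boundary (though note the appeal to (snN4) via ``enclosed interiors'' silently switches from the path to the planar region it bounds, which should be stated explicitly).
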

\begin{conjecture}
Every finite, bounded, planar shape with a decomposition and with at least one hole contains
a descriptive homology nerve that does not intersect with the boundary of any hole. 
\qquad \textcolor{blue}{\Squaresteel}
\end{conjecture}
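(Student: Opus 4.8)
\medskip
\noindent\emph{Proof sketch.} The plan is to exhibit one explicit 1-cycle in $\mathcal{H}_1$ that is disjoint from every hole boundary and then turn it into a descriptive homology nerve by the device already used in the proof of Theorem~\ref{thm:descriptiveHomologyNerve}.

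First I would fix notation: let $K$ be the simplicial complex (decomposition) covering $\sh A$ and let $H_o^1,\dots,H_o^k$, with $k\geq 1$, be the holes, carrying boundary 1-cycles $\partial H_o^1,\dots,\partial H_o^k\in B_1$. Since $\sh A$ is bounded by a simple closed curve and has nonempty interior, its outer contour is a simple, closed, connected path $\cyc A$ of edges of $K$, all of which lie on $\bdy\,\sh A$ and so are not boundaries of any hole; hence $\cyc A\in\mathcal{H}_1$. In fact $\cyc A$ is not a boundary at all, since the hole regions carry no $2$-simplices and hence $\cyc A$ is homologous to $\sum_j\partial H_o^j$, a nonzero class in $H_1$. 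Because the holes lie in the interior of $\sh A$, the path $\cyc A$ shares no vertex and no edge with any $\partial H_o^j$, that is, $\cyc A\cap\partial H_o^j=\emptyset$ for every $j$. (If a cycle that actually loops a single hole is preferred to the contour, one reaches the same conclusion with a loop around $H_o^1$ routed through the interior of $\sh A$, outside a thin collar of $\partial H_o^1$ and clear of the remaining hole boundaries.)

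Next I would invoke the construction inside the proof of Theorem~\ref{thm:descriptiveHomologyNerve}: since $\cyc A\ \dnear\ \cyc A$, Definition~\ref{def:descriptiveHomologyNerve} makes the one-element collection $\{\cyc A\}$ a descriptive homology nerve $\Nrv_{\Phi}\mathcal{H}_1$ with nucleus $\cyc A$, because $\Dcap$ over a single set reduces to $\Phi(\cyc A)\neq\emptyset$. The only vertices and edges occurring in this nerve are those of $\cyc A$, and by the previous paragraph these meet no hole boundary, so $\Nrv_{\Phi}\mathcal{H}_1$ does not intersect $\partial H_o^j$ for any $j$, which is the claim.

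The main obstacle is not this minimal construction but making it robust. Definition~\ref{def:descriptiveHomologyNerve} also attaches to the nucleus the full cluster $X=\{\cyc B\in\mathcal{H}_1:\cyc A\ \dnear\ \cyc B\}$, and a priori some cycle descriptively near the contour could run along a hole boundary. To obtain a maximal descriptive homology nerve that is still disjoint from all hole boundaries I would choose the feature map $\Phi$ so that a cycle's description records, alongside a shape feature such as uniform iso-curvature, a quantity that vanishes exactly when the cycle avoids $\bigcup_j\partial H_o^j$ --- for instance the count of edges the cycle shares with that set, or its minimum distance to it --- so that $\cyc A\ \dnear\ \cyc B$ forces $\cyc B\cap\partial H_o^j=\emptyset$ for all $j$. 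Verifying that such a $\Phi$ still satisfies the Descriptive Lodato axioms (dP0)--(dP4), so that $\dnear$ remains the descriptive proximity used throughout the paper, is the step that requires care.
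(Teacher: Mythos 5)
First, a point of comparison you could not have known: the paper offers no proof of this statement at all --- it is explicitly left as a \emph{conjecture} --- so your attempt has to be judged on its own, and it does contain genuine gaps. The easy half of your argument (the singleton nerve) is consistent with how the paper itself uses Definition~\ref{def:descriptiveHomologyNerve} inside the proof of Theorem~\ref{thm:descriptiveHomologyNerve}: since $\cyc A\ \dnear\ \cyc A$, the one-element collection $\left\{\cyc A\right\}$ qualifies as a descriptive homology nerve with nucleus $\cyc A$. But if that reading is accepted, the conjecture becomes nearly vacuous, and you correctly identify that the real content lies in the cluster $X=\left\{\cyc B\in\mathcal{H}_1:\cyc A\ \dnear\ \cyc B\right\}$ attached to the nucleus. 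Note also that even your ``easy'' step quietly assumes the closures of the holes lie in the interior of $\sh A$, so that the outer contour shares no vertex or edge of the decomposition with any $\partial H_o$; the paper's definitions do not exclude a hole boundary touching the contour at a vertex, so this needs to be stated as a hypothesis or argued.

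The decisive gap is in your repair step. Redefining $\Phi$ so that descriptive nearness to the contour forces avoidance of $\bigcup_j\partial H_o^j$ changes the descriptive proximity $\dnear$ itself, hence changes the statement being proved: the conjecture is presumably asserted for the shape descriptions the paper actually works with (e.g., uniform iso-curvature of arcs), not for a probe chosen after the fact to make the claim true. Worse, the mechanism you sketch does not work under the paper's definitions. The probe $\Phi$ is applied to points (or arcs), with $\Phi(A)=\left\{\Phi(x):x\in A\right\}$, and $\cyc A\ \dnear\ \cyc B$ requires only \emph{one} pair $x\in\cyc A$, $y\in\cyc B$ with $\Phi(x)=\Phi(y)$. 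If your extra coordinate is a per-point quantity (distance of $x$ to the hole boundaries), then a cycle $\cyc B$ running partly along $\partial H_o$ can still contain an interior point whose full description matches a point of the contour, so $\cyc B$ enters the cluster while intersecting a hole boundary; the avoidance property does not propagate from one matching point to the whole cycle. If instead the extra coordinate is a cycle-level quantity (number of edges the cycle shares with the hole boundaries), then $\Phi(x)$ is no longer well defined as a function of the point or arc $x$, since an arc can belong to several 1-cycles (exactly the overlap situation of Fig.~\ref{fig:nearCycles}). So the step you flag as ``requiring care'' is not merely unverified --- as proposed it fails, and the conjecture remains open for a fixed, point-based description.
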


\noindent Consider next a basis for a shape signature.

\begin{definition}\label{def:signature} {\bf Shape Signature}.\\
Let $\mathcal{H}_1$ be a collection of homology groups on a simplicial complex covering a shape $\sh A$, a finite bounded planar region with nonempty interior and let $\Nrv_1 \mathcal{H}_1,\Nrv_2 \mathcal{H}_1\in 2^{{\mathcal{H}_1}}$.   Assume that $\mathcal{H}_1$ is equipped with a proximal relator $\left\{\sn, \dnear\right\}$. A signature of shape $\sh A$ {\rm (}denoted by $\sig(\sh A)${\rm )} is a feature vector that includes at least one of the following components.


\begin{compactenum}[1$^o$]
\item {\bf Geometry}:  One or more features of the curvature of each 1-cycle $\cyc A\in \mathcal{H}_1$ are included in $\sig(\sh A)$ that describes shape $\sh A$.
\item {\bf Homology}:  rank of the homology group $H_1$ (denoted by $rH_1$), {\em i.e.}, number of 1-cycle generators of $H_1$ is defined in terms of the rank of the cycles group $Z_1$ (denoted by $rZ_1$) and the rank of the boundaries group $B_1$ (denoted by $rB_1$) .  Recall that 
\[
rH_1 = r(Z_1/B_1) = rZ_1 - rB_1\ \mbox{({\bf Rank of a homology group})}\mbox{~\cite[p. 63]{Vick1994homologyTheory}}.
\]
The rank $rH_1$ (a Betti number) can change over time and provides a useful in indicator of planar shape persistence.   Hence, its inclusion in a shape $\sh A$ signature $\sig(\sh A)$ (barcode) is important in considering the persistent topology of data such as that found in R. Ghrist~\cite{Ghrist2008BAMSbarcodePersistence}.
\item {\bf Homology Nerve}:  Since every $\cyc A\in \mathcal{H}_1$ is the nucleus of a descriptive homology nerve $\Nrv_{\Phi} \mathcal{H}_1$ {\rm(}from Theorem~\ref{thm:descriptiveHomologyNerve}{\rm)},  select a component of $\Phi(\cyc A)$ (call it $x$) with a description that matches the description of the same component in the other members of $\Nrv_{\Phi} \mathcal{H}_1$.   Include $\Phi(x)$ in the signature of $\sh A$, {\em i.e.}, 
\[
\sig(\sh A) = \left(\dots,\Phi(x)\dots\right)\ \mbox{($\Phi(x)$ in feature vector that describes $\sh A$)}.
\]
\item {\bf Closure Finiteness}:
 Let $\arc{vv'}$ be an arc in a 1-cycle $\cyc A\in \mathcal{H}_1$ and $\cl(\arc{vv'})$ intersects only a finite number of other arcs in $\mathcal{H}_1$.   $\cl(\arc{vv'})$ is the closure of an arc in $\cyc A\ \dcap\ \cyc B$ for a finite number of 1-cycles.   For $\cyc A,\cyc B\in \mathcal{H}_1$, choose $\Phi(\cl(\arc{vv'}))\in \sig(\sh A)$ or $\Phi(\cyc A)\in \sig(\sh A)$ for a selected number of  1-cycles in $\mathcal{H}_1$.
\item {\bf descriptive CW}:  ({{\em i.e.}, \bf descriptive Weak Topology})  Assume that Closure Finiteness holds for the collection of homology groups $\mathcal{H}_1$ equipped with the descriptive proximity $\dnear$.   Let $\arc{vv'}$ be an arc in $H_1\in \mathcal{H}_1$ and let 1-cycle $\cyc A\in \mathcal{H}_1$.   Then $\cyc A$ is closed in $\mathcal{H}_1$, provided $\cyc A\cap \arc{vv'}\neq \emptyset$ is also closed in $H_1$ .   Then $\cyc A\ \sn\ \arc{vv'}$.   Hence, from Lemma~\ref{thm:sn-implies-near}, $\cyc A\ \dnear\ \arc{vv'}$.   For example, 1-cycles $\cyc A, \cyc B$ in Fig.~\ref{fig:nearCycles} overlap, since arc $\arc{v_3v_6}$ is common to both 1-cycles.   Such arcs provide an incisive feature for a shape signature.   Then, for a shape $\sh A$, include the description of such arcs in the shape signature $\sig(\sh A)$.   
\qquad \textcolor{blue}{\Squaresteel}
\end{compactenum}
\end{definition}

\begin{remark}
The original idea of a CW topology ({\bf C}losure finite {\bf W}eak topology) was to shift from structures in simplicial complexes $K$ that are the focus in P. Alexandroff~\cite{Alexandroff1932elementaryConcepts} and in P. Alexandroff, H. Hopf~\cite{Alexandroff1935} to homological structures called cells and cell complexes ({\em e.g.}, 0-cells (vertices) and 1-cells (open arcs) attached to a shape skeleton via maps to obtain a cell complex) in a homology on $K$~\cite[p. 214]{Whitehead1949BAMS-CWtopology}.   A \emph{cell complex} is a finite collection of cells~\cite{Hatcher2002CUPalgebraicTopology}. With a descriptive CW, we shift from a description of structures ({\em e.g.}, simplicial nerves~\cite[p. 2]{Peters2017arXiv1704-05909spokes} and nerve spokes~\cite[\S 2.2, p. 4]{Peters2017arXiv1704-05909spokes}~\cite[Def. 9, p. 8]{AhmadPeters2017arXiv1706-04549v1spokeComplexes}) in simplicial complexes to a description of structures such as homology nerves, collections of 1-cycles and overlapping arcs in a collection of homology groups $\mathcal{H}_1$ in cell complexes on finite bounded planar shapes.   Basically, with a descriptive CW on $\mathcal{H}_1$, we include those features of arcs, 1-cycles and homology nerves in $\mathcal{H}_1$ that provide a complete signature $\sig(\sh A)$ for a shape $\sh A$.   The motivation for doing this is an interest in measuring the persistence of the feature values of arcs, 1-cycles and homology nerves in homology groups over time.   This descriptive CW is based on the Closure finiteness and Weak topology axioms for a traditional CW complex given by K. J\"{a}nich~\cite[\S VII.3, p. 95]{Janich2005closedWeakTopology} founded on its original introduction by J.H.C. Whitehead~\cite{Whitehead1949BAMS-CWtopology}.
\qquad \textcolor{blue}{\Squaresteel}
\end{remark}

\section{Main Results}

\begin{theorem}
Every finite, bounded planar shape $\sh A$ covered by a simplicial complex  has a signature derived from the homology group on the complex.
\end{theorem}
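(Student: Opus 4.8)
The plan is to give a constructive existence proof: from the given cover of $\sh A$ by a simplicial complex $K$, build the homology group $H_1$ explicitly and then exhibit a concrete feature vector meeting Definition~\ref{def:signature}.

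First I would form, as in the Introduction, the chain groups $C_2(K),C_1(K),C_0(K)$ together with the boundary homomorphisms $\partial_2,\partial_1$, and pass to $Z_1=\mbox{ker}\,\partial_1$, $B_1=\mbox{img}\,\partial_2$, and the quotient $H_1=Z_1/B_1$, so that $\mathcal{H}_1$ is the collection of $1$-cycles on $K$. Since $\sh A$ is bounded by a simple closed curve and has nonempty interior, its contour is a simple, closed, connected edge path in $K$ that is not the boundary of any hole, hence a legitimate member of $\mathcal{H}_1$; in particular $\mathcal{H}_1\neq\emptyset$. Because $K$ is a finite complex covering the finite, bounded region $\sh A$, each $C_p(K)$ is finitely generated, so $rZ_1$ and $rB_1$ are finite and $rH_1=rZ_1-rB_1$ is a well-defined nonnegative integer, the Betti number of $H_1$.

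Next I would assemble the signature. By Definition~\ref{def:signature} it suffices to produce one admissible component, and the Homology component $rH_1$ already qualifies, yielding $\sig(\sh A)=(\dots,rH_1,\dots)$, a feature vector in some $\mathbb{R}^n$ derived directly from $H_1$. To exhibit a fuller signature I would append further admissible entries: a Geometry entry given by one or more curvature features of the distinguished contour $1$-cycle $\cyc A\in\mathcal{H}_1$; a Homology Nerve entry, since by Theorem~\ref{thm:descriptiveHomologyNerve} every $\Phi(\cyc A)\in\Phi(\mathcal{H}_1)$ is the nucleus of a descriptive homology nerve $\Nrv_{\Phi}\mathcal{H}_1$, so that $\Phi(x)$ can be recorded for a component $x$ of $\Phi(\cyc A)$ shared across that nerve; a Closure Finiteness entry, since finiteness of $K$ guarantees $\cl(\arc{vv'})$ meets only finitely many arcs of $\mathcal{H}_1$, so a description $\Phi(\cl(\arc{vv'}))$ is available; and, under the descriptive CW axioms on $(\mathcal{H}_1,\dnear)$, an overlapping-arc entry for arcs common to distinct $1$-cycles (e.g. those guaranteed near by Lemma~\ref{thm:sn-implies-near}). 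Each entry is a real vector computed from data attached to $H_1$, so the concatenation $\sig(\sh A)$ is the required signature.

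I expect the only genuine obstacle to be the soft issue of non-vacuity and well-definedness rather than any computation: one must be certain that $\mathcal{H}_1$ is nonempty for an \emph{arbitrary} planar shape, which rests on the standing assumption that a planar shape is bounded by a simple closed curve with nonempty interior (so its contour is a non-hole-bounding $1$-cycle), and one must know $rH_1$ is finite, which rests on finiteness of the cover $K$. Once $\mathcal{H}_1\neq\emptyset$ and $rH_1$ is finite, the remaining steps are routine bookkeeping with the constructions and results already in place, namely Definition~\ref{def:signature} and Theorem~\ref{thm:descriptiveHomologyNerve}.
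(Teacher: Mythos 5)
Your proposal is correct and follows essentially the paper's own route: the paper's proof simply invokes Definition~\ref{def:signature} and notes that including the rank $rH_1$ (the Betti number) alone already yields a signature $\sig(\sh A)$. Your additional details (explicit construction of $H_1$, non-emptiness of $\mathcal{H}_1$ via the contour $1$-cycle, finiteness of $rH_1$, and the optional extra components) elaborate the same argument rather than change it.
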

\begin{proof}
From Def.~\ref{def:signature}, it is enough to include the rank of $H_1$ in  $\sig(\sh A)$ for a shape $\sh A$ to have a signature.
\end{proof}

\begin{lemma}\label{lemma:LeaderUniformTopologyOnH1}
Let $\mathcal{H}_1$ be a collection of homology groups equipped with the proximal relator $\mathscr{R}_{\Phi}=\left\{\sn,\dnear\right\}$ on a simplicial complex covering a finite, bounded shape.
Every collection of 1-dimensional homology groups $H_1\in \mathcal{H}_1$ endowed with the proximal relator $\mathscr{R}_{\Phi}$ defines a descriptive uniform Leader topology on $\mathcal{H}_1$.
\end{lemma}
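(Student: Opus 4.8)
The plan is to realize the asserted topology as the Leader closure topology attached to the descriptive proximity $\dnear$ on $\mathcal{H}_1$, and then to observe that the closure operator in play is a genuine Kuratowski operator precisely because $\dnear$ satisfies the Descriptive Lodato axioms (dP0)--(dP4). First I would, for each subcollection $\mathcal{A}\subseteq \mathcal{H}_1$ of 1-cycles, set
\[
\cl_{\Phi}(\mathcal{A}) = \left\{\cyc B\in \mathcal{H}_1:\ \cyc B\ \dnear\ \mathcal{A}\right\},
\]
the collection of 1-cycles descriptively near $\mathcal{A}$; by Definition~\ref{def:descriptiveHomologyNerve} and Theorem~\ref{thm:descriptiveHomologyNerve} this is exactly the union over $\cyc A\in\mathcal{A}$ of the descriptive homology-nerve clusters having $\cyc A$ as nucleus.

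Next I would verify the four Kuratowski axioms for $\cl_{\Phi}$. Emptiness, $\cl_{\Phi}(\emptyset)=\emptyset$, is immediate from (dP0). Extensivity, $\mathcal{A}\subseteq \cl_{\Phi}(\mathcal{A})$, follows because every 1-cycle satisfies $\cyc A\ \dcap\ \cyc A\neq\emptyset$, whence $\cyc A\ \dnear\ \cyc A$ by (dP2). Finite additivity $\cl_{\Phi}(\mathcal{A}\cup\mathcal{B}) = \cl_{\Phi}(\mathcal{A})\cup\cl_{\Phi}(\mathcal{B})$ is exactly axiom (dP3). Idempotence $\cl_{\Phi}(\cl_{\Phi}(\mathcal{A}))=\cl_{\Phi}(\mathcal{A})$ is obtained from the Descriptive Lodato axiom (dP4): if $\cyc C\ \dnear\ \cl_{\Phi}(\mathcal{A})$, then taking $B=\cl_{\Phi}(\mathcal{A})$ and using that $\{\cyc B\}\ \dnear\ \mathcal{A}$ for each $\cyc B\in B$ yields $\cyc C\ \dnear\ \mathcal{A}$, i.e. $\cyc C\in\cl_{\Phi}(\mathcal{A})$; the reverse inclusion is extensivity. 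This already produces a topology on $\mathcal{H}_1$ whose closed sets are the $\dnear$-closed subcollections.

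To upgrade ``topology'' to ``Leader uniform topology'' I would exhibit a descriptive uniformity $\mathscr{U}_{\Phi}$ on $\mathcal{H}_1$ inducing it, following Leader's 1959 construction. For each admissible feature component one takes the entourage pairing 1-cycles whose selected descriptions in $\mathbb{R}^n$ agree; since agreement of feature vectors is a reflexive, symmetric relation, the finite intersections of such ``same-description'' entourages form a filter base of a uniformity, and $\cyc C\in\cl_{\Phi}(\mathcal{A})$ iff every entourage of $\mathscr{U}_{\Phi}$ meets $\mathcal{A}\times\{\cyc C\}$. Here Lemma~\ref{thm:sn-implies-near} and Theorem~\ref{thm:spoke} do the consistency bookkeeping: the strong-proximity entourages refine the descriptive ones, so the two relations of the relator $\mathscr{R}_{\Phi}=\{\sn,\dnear\}$ generate the same uniform topology on $\mathcal{H}_1$. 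Consequently the closure topology of the previous paragraph coincides with the topology of $\mathscr{U}_{\Phi}$, which is by definition the descriptive Leader uniform topology, and the lemma also follows (as a restatement) from Theorem~\ref{thm: LeaderUniformTopology}.

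The main obstacle I expect is the idempotence step together with the passage from a bare closure topology to an actual uniformity: applying (dP4) correctly requires the hypothesis ``$\{\cyc B\}\ \dnear\ \mathcal{A}$ for every $\cyc B$ in the intermediate subcollection'' to hold in the indexed, homology-nerve setting rather than for a single set, and one must certify that the descriptive neighborhoods are of ``uniform size'' --- independent of the base 1-cycle --- which is exactly where matching feature vectors in $\mathbb{R}^n$ (in place of metric balls) is doing the work. Everything else is a routine transcription of the Lodato/Leader machinery into the descriptive setting over $\mathcal{H}_1$.
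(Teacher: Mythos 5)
Your proposal is essentially correct, but it follows a genuinely different route from the paper's own proof. The paper works directly at the level of homology nerves and imitates Leader's construction set-theoretically: for each $\Nrv_1\mathcal{H}_1$ it selects every $\Nrv_2\mathcal{H}_1$ with $\Nrv_1\mathcal{H}_1\ \sn\ \Nrv_2\mathcal{H}_1$, uses Lemma~\ref{thm:sn-implies-near} to get $\Nrv_1\mathcal{H}_1\ \dcap\ \Nrv_2\mathcal{H}_1\neq\emptyset$, and then argues that descriptive intersections and descriptive unions of nerves again have their descriptions in $\Phi(\mathcal{H}_1)$, so that $\Phi(\mathcal{H}_1)$ is closed under $\dcap$ and $\dcup$ and generates the topology; no closure operator and no entourages appear. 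You instead run the classical Lodato machinery: define $\cl_{\Phi}$ from $\dnear$, verify the Kuratowski axioms from (dP0)--(dP4), and then exhibit a descriptive uniformity whose entourages are same-description equivalence relations, recovering $\cl_{\Phi}$ as the uniform closure. What your route buys is precisely what the paper leaves unverified --- idempotence of the closure (via (dP4)) and the existence of an actual uniform structure (equivalence relations trivially satisfy the composition axiom) --- at the cost of the set-level bookkeeping you flag yourself: $\dnear$ in this paper relates subsets of the plane, so $\cyc B\ \dnear\ \mathcal{A}$ for a subcollection $\mathcal{A}\subseteq\mathcal{H}_1$ must be read as nearness to $\bigcup_{\cyc A\in\mathcal{A}}\cyc A$, and (dP4) must be applied with 1-cycles playing the role of points. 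Two cautions: your parenthetical claim that the $\sn$-entourages and the $\dnear$-entourages generate the \emph{same} uniform topology does not follow from refinement (Lemma~\ref{thm:sn-implies-near} gives only one direction), though it is not needed, since the descriptive uniformity alone yields the asserted topology; and your closing remark that the lemma ``also follows from Theorem~\ref{thm: LeaderUniformTopology}'' inverts the paper's logical order --- the paper derives that theorem from this lemma, so that shortcut would be circular.
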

\begin{proof}$\mbox{}$\\
\noindent The basic approach in this proof is to use the steps for constructing a uniform topology introduced by S. Leader~\cite{Leader1959} in constructing a descriptive uniform topology. \\
\noindent $\dcap$:  For each $\Nrv_1\mathcal{H}_1\in \mathcal{H}_1$, select all $\Nrv_2\mathcal{H}_1\in \mathcal{H}_1$ such that $\Nrv_1\mathcal{H}_1\ \sn\ \Nrv_2\mathcal{H}_1$, {\em i.e.}, the pair of homology nerves $\Nrv_1\mathcal{H}_1\in \mathcal{H}_1$ overlap (have strong proximity).   From Lemma~\ref{thm:sn-implies-near},  $\Nrv_1\mathcal{H}_1\ \dcap\ \Nrv_2\mathcal{H}_1\neq \emptyset$.   Hence, $\Nrv_1\mathcal{H}_1\ \dcap\ \Nrv_2\mathcal{H}_1\in \Phi(\mathcal{H}_1)$.\\
$\dcup$:  By definition,
\begin{align*}
\Nrv_1\mathcal{H}_1\ \dcup\ \Nrv_2\mathcal{H}_1 &= \{\cyc A\in \mathcal{H}_1: \cyc A \in \Nrv_1\mathcal{H}_1\ \dcap\ \Nrv_2\mathcal{H}_1\\ 
          &\mbox{or}\ \Phi(\cyc A) \in \Phi(\Nrv_1\mathcal{H}_1)\ \mbox{or}\ \Phi(\cyc A) \in \Phi(\Nrv_2\mathcal{H}_1)\}.
\end{align*}
Hence, $\Nrv_1\mathcal{H}_1\ \dcup\ \Nrv_2\mathcal{H}_1\in \Phi(\mathcal{H}_1)$.
\end{proof}

\begin{remark}
Lemma~\ref{lemma:LeaderUniformTopologyOnH1} is a stronger result than we need to derive a descriptive CW, which is a convenient setting for the study of finite, bounded planar shapes signatures.
Theorem~\ref{thm: LeaderUniformTopology} is a direct result of Lemma~\ref{lemma:LeaderUniformTopologyOnH1}.
\qquad \textcolor{blue}{\Squaresteel}
\end{remark}

\begin{theorem}\label{EHnerve}{\rm ~\cite[\S III.2, p. 59]{Edelsbrunner1999}}
Let $\mathscr{F}$ be a finite collection of closed, convex sets in Euclidean space.  Then the nerve of $\mathscr{F}$ and the union of the sets in $\mathscr{F}$ have the same homotopy type.
\end{theorem}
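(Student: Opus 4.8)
The plan is to reduce the statement to the Corollary of Borsuk's Theorem~\ref{thm:regularDecompositions} recorded above. Write $\mathscr{F}=\{C_1,\dots,C_n\}$ and $U=\bigcup\mathscr{F}\subset\mathbb{R}^d$. First I would observe that, since each $C_i$ is closed and convex, every nonempty intersection $C_\sigma=\bigcap_{i\in\sigma}C_i$ over a subcollection $\sigma\subseteq\{1,\dots,n\}$ is again closed and convex, hence an absolute retract and in particular contractible. Thus the finite closed cover $\{C_i\}$ of the finite-dimensional metric space $U$ has the characteristic property of a \emph{regular decomposition}: every nonempty finite intersection of its members is contractible (it is a ``good cover'' in the modern terminology). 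Local finiteness and closedness of the cover are immediate since the cover is finite.

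Next I would check that the Edelsbrunner--Harer nerve $\Nrv\mathscr{F}=\{\sigma\subseteq\mathscr{F}:\bigcap\sigma\neq\emptyset\}$ is precisely the (abstract) Alexandroff nerve of this decomposition, so that a geometric realization $\abs{\Nrv\mathscr{F}}$ is a polytope that is the geometric realization of the nerve of a regular decomposition of the finite-dimensional space $U$. At that point the Corollary to Theorem~\ref{thm:regularDecompositions} applies verbatim and gives that $U$ and $\abs{\Nrv\mathscr{F}}$ --- equivalently $\bigcup\mathscr{F}$ and $\Nrv\mathscr{F}$ --- have the same homotopy type, which is the assertion of Theorem~\ref{EHnerve}.

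The step I expect to be the main obstacle is the verification that a finite cover of a finite-dimensional set by closed convex sets genuinely satisfies every clause of Borsuk's notion of a regular (and ``similar'') decomposition, and that nothing beyond contractibility of the intersections --- no extra ANR/CW tameness of $U$ itself --- is being used implicitly; closed convex sets and their finite unions do supply this automatically (they are ANR's), but the matching must be spelled out. If instead one wanted a self-contained argument avoiding Borsuk's machinery, I would induct on $n=\#\mathscr{F}$: the case $n=1$ is trivial since $C_1$ is contractible and $\Nrv\mathscr{F}$ is a point; for the inductive step write $U=V\cup C_n$ with $V=\bigcup_{i<n}C_i$, note that $V\cap C_n=\bigcup_{i<n}(C_i\cap C_n)$ is a union of fewer closed convex sets whose nerve is the link of the vertex $n$ in $\Nrv\mathscr{F}$, apply the inductive hypothesis to $V$ and to $V\cap C_n$, use contractibility of $C_n$, and glue via the homotopy-pushout (Mayer--Vietoris) descriptions of $U$ and of $\abs{\Nrv\mathscr{F}}$; the delicate point there --- the real content of the nerve theorem --- is arranging the three corner homotopy equivalences to be compatible with the pushout structure maps so that the gluing lemma for cofibrations produces a homotopy equivalence of the pushouts, the cofibration hypotheses being guaranteed by the ANR-pair property of closed convex sets and subpolyhedra.
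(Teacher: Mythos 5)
The paper never proves Theorem~\ref{EHnerve}: it is imported verbatim by citation from Edelsbrunner--Harer, who themselves record it as an instance of the classical Nerve Theorem going back to Borsuk and Leray. So your proposal supplies an argument where the paper supplies none, and your primary route --- reducing to Borsuk's Corollary quoted right after Theorem~\ref{thm:regularDecompositions} --- is essentially the mathematics that the citation ultimately rests on, and it is sound in outline: every nonempty intersection of members of $\mathscr{F}$ is again closed and convex, hence an absolute retract, so the finite closed cover is a regular decomposition of the finite-dimensional space $\bigcup\mathscr{F}$, and its Alexandroff nerve is exactly $\Nrv\mathscr{F}$, so the Corollary gives the homotopy equivalence. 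Two caveats are worth making explicit. First, Borsuk's regularity demands absolute-retract intersections, not merely contractible ones (you do note this), and his 1948 setting is compact metric spaces; for possibly unbounded closed convex sets the clean patch is Dugundji's theorem that every convex subset of Euclidean space is an AR --- if you instead try to reduce to the compact case by intersecting the members of $\mathscr{F}$ with a large ball containing a witness point of each nonempty intersection, you preserve the nerve but still owe an argument that the truncation does not change the homotopy type of the union, which is not immediate. Second, your alternative inductive Mayer--Vietoris route correctly identifies the nerve of $\{C_i\cap C_n\}_{i<n}$ with the link of the vertex $n$, but it leaves unproved exactly the hard step --- arranging the three homotopy equivalences to be compatible with the pushout structure maps so the gluing lemma applies --- so as written it is a sketch, whereas the Borsuk reduction is complete modulo the quoted corollary. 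In short: compared with the paper, what your write-up buys is an actual proof (via a result the paper already quotes) rather than an appeal to the literature, at the cost of having to police Borsuk's hypotheses carefully.
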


\begin{lemma}\label{lemma:homotopyType}
Let $\mathcal{H}_1$ be a collection of homology groups on a simplicial complex covering a finite, bounded shape.    Then a homology nerve $\Nrv \mathcal{H}_1\in 2^{\mathcal{H}_1}$ and $\mathop{\bigcup}\limits_{\cyc A\in \mathcal{H}_1}\cyc A$ have same homotopy type.
\end{lemma}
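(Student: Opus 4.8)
The plan is to realize the homology nerve $\Nrv \mathcal{H}_1$ as an instance of the Edelsbrunner--Harer nerve to which Theorem~\ref{EHnerve} applies, and thereby transfer the homotopy equivalence. First I would observe that each $1$-cycle $\cyc A \in \mathcal{H}_1$ is, as a subset of the ambient bounded planar region, a simple closed connected path together with its interior (recall a planar shape is bounded by a simple closed curve with nonempty interior); by the Jordan--Sch\"onflies picture this set is homeomorphic to a closed disk, hence closed and convex up to homeomorphism, and in any case the homotopy-type hypothesis of the nerve theorem is met by each $\cyc A$ and by every nonempty finite intersection of such sets, since a finite intersection of (filled) $1$-cycles in the plane is again contractible or empty. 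The second step is to note that, by Definition~\ref{def:homologyNerve}, $\Nrv \mathcal{H}_1 = \{\cyc A \in \mathcal{H}_1 : \bigcap \cyc A \neq \emptyset\}$ is exactly $\Nrv \mathscr{F}$ in the sense of Theorem~\ref{EHnerve} for the finite collection $\mathscr{F} = \mathcal{H}_1$ of these filled $1$-cycles. Applying Theorem~\ref{EHnerve} then gives that $\Nrv \mathcal{H}_1$ and $\bigcup_{\cyc A \in \mathcal{H}_1} \cyc A$ have the same homotopy type, which is the claim.

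The key steps, in order, are: (1) identify each $1$-cycle with the filled region it bounds, so that it becomes a closed, contractible (indeed up-to-homeomorphism convex) subset of the plane; (2) check the finiteness hypothesis --- $\mathcal{H}_1$ is a finite collection because the covering simplicial complex $K$ on the finite bounded shape $\sh A$ is finite, so there are only finitely many $1$-cycles; (3) check that all nonempty finite intersections of members of $\mathscr{F}$ have trivial homotopy type, which is where the planarity and the ``simple closed path with nonempty interior'' hypotheses do the work; (4) invoke Theorem~\ref{EHnerve} verbatim; and (5) rewrite $\Nrv \mathscr{F} = \Nrv \mathcal{H}_1$ using Definition~\ref{def:homologyNerve} to match the statement.

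I expect the main obstacle to be step (3): the clean form of the Edelsbrunner--Harer nerve theorem requires the sets (and all their nonempty intersections) to be convex, or at least to have the homotopy type of a point, but the ``filled'' $1$-cycles here are only convex up to homeomorphism, and two such filled cycles can meet along an arc $\arc{vv'}$ (as in Fig.~\ref{fig:nearCycles}, where $\arc{v_3 v_6}$ is shared by $\cyc A$ and $\cyc B$) or along a more complicated planar region whose contractibility must be argued rather than assumed. The fix is to restrict attention to a good cover --- choosing the decomposition of $\sh A$ so that the filled $1$-cycles and all their finite intersections are contractible --- or equivalently to appeal to the general (non-convex) nerve lemma for good covers, noting that for planar regions bounded by simple closed curves a nonempty intersection is automatically a disjoint union of disks, and then observing that one may further refine $\mathcal{H}_1$ so that such intersections are connected. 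A secondary, more cosmetic point is that Theorem~\ref{EHnerve} is stated for closed sets while $1$-cycles are sometimes treated as open arcs (the ``descriptive CW'' discussion uses open cells); taking closures, as is done for arcs in Definition~\ref{def:signature}, removes this discrepancy without changing homotopy type.
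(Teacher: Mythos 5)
Your proposal follows the same route as the paper: identify $\Nrv \mathcal{H}_1$ with an Edelsbrunner--Harer nerve and invoke Theorem~\ref{EHnerve}. The paper's own proof, however, is essentially two lines: it asserts that the members of $\mathcal{H}_1$ are ``closed, convex sets in Euclidean space'' and cites Theorem~\ref{EHnerve}; it does not fill the cycles, does not discuss contractibility of intersections, and does not pass to a good cover. So the extra work in your steps (1) and (3) is not in the paper, and your instinct that this is exactly where the argument is fragile is sound: a $1$-cycle taken literally as a simple closed path is homotopy equivalent to a circle, hence neither convex nor contractible, so the hypothesis of Theorem~\ref{EHnerve} is not satisfied by the sets the lemma actually names. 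Be aware, though, that your repair changes the statement being proved: replacing each $\cyc A$ by the filled region it bounds alters the union --- already for a single $1$-cycle the union of the cycles is a circle while the union of the filled disks is contractible, and since members of $H_1$ are precisely the cycles that are \emph{not} boundaries, the filled region typically contains a hole and need not lie in $\sh A$ at all. What your argument yields is a homotopy equivalence between the nerve and the union of the filled regions, not $\bigcup_{\cyc A\in \mathcal{H}_1}\cyc A$ as written. In short: same approach as the paper, you correctly flag the convexity/contractibility gap that the paper's proof glosses over, but your good-cover/filling fix proves a different union statement; a faithful repair would have to either reinterpret the lemma consistently (filled cycles on both sides) or establish the nerve-theorem hypotheses for the cycles themselves, which fails as stated.
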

\begin{proof}$\mbox{}$\\
\noindent $\mathcal{H}_1$ is a collection of 1-cycles, which are closed, convex sets in Euclidean space.   
Then from Theorem~\ref{EHnerve}, $\Nrv \mathcal{H}_1$ and  $\mathop{\bigcup}\limits_{\cyc A\in \mathcal{H}_1}\cyc A$ have same homotopy type.
\end{proof}

\begin{theorem}
Let $\left(\mathcal{H}_1,\left\{\sn,\dnear\right\}\right)$ be a collection of homology groups $H_1$ equipped with a proximal relator on a simplicial complex covering a finite, bounded shape.    Then $\Phi(\Nrv \mathcal{H}_1))\in 2^{\mathbb{R}^n}$ (a description of a homology nerve) and $\mathop{\bigcup}\limits_{\Phi(\cyc A)\in \Phi(\Nrv \mathcal{H}_1)}\Phi(\cyc A)$ (union of the descriptions) have same homotopy type.
\end{theorem}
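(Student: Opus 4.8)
The plan is to mimic the proof of Lemma~\ref{lemma:homotopyType}, but carried out entirely on the descriptive side by transporting the hypotheses of Theorem~\ref{EHnerve} through the description map $\Phi$. First I would observe that $\Phi(\mathcal{H}_1) = \left\{\Phi(\cyc A): \cyc A\in \mathcal{H}_1\right\}$ is, like $\mathcal{H}_1$ itself, a finite collection; each $\Phi(\cyc A)$ is a feature vector (or a set of feature vectors for the arcs of $\cyc A$) sitting in $\mathbb{R}^n$, and the image of a closed, convex 1-cycle under a continuous description map is again a closed, convex subset of Euclidean space. Thus $\Phi(\Nrv \mathcal{H}_1)$ is a nerve of a finite collection of closed, convex sets in $\mathbb{R}^n$, and Theorem~\ref{EHnerve} applies verbatim to $\Phi(\mathcal{H}_1)$ in place of $\mathscr{F}$.

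Next I would spell out that the descriptive nerve agrees with the ordinary nerve of the image family: a subcollection of 1-cycles has nonempty descriptive intersection precisely when the corresponding subcollection of images $\Phi(\cyc A)$ has nonempty ordinary intersection in $\mathbb{R}^n$, so $\Phi(\Nrv \mathcal{H}_1)$ is exactly $\Nrv\!\left(\Phi(\mathcal{H}_1)\right)$ in the Edelsbrunner--Harer sense. Then Theorem~\ref{EHnerve}, applied to the finite family $\Phi(\mathcal{H}_1)$ of closed convex sets, yields that $\Nrv\!\left(\Phi(\mathcal{H}_1)\right)$ and $\mathop{\bigcup}\limits_{\Phi(\cyc A)\in \Phi(\Nrv \mathcal{H}_1)}\Phi(\cyc A)$ have the same homotopy type. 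Combining the two identifications gives the claimed statement that $\Phi(\Nrv \mathcal{H}_1)$ and the union of the descriptions share a homotopy type, with the proximal relator $\left\{\sn,\dnear\right\}$ supplying (via Lemma~\ref{thm:sn-implies-near} and Proposition~\ref{prop:dnear}) the bridge between strong overlap of 1-cycles and nonempty descriptive intersection of their feature sets, ensuring the two nerves really do coincide.

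The main obstacle I expect is the convexity/closedness hypothesis. The Edelsbrunner--Harer nerve theorem genuinely requires closed \emph{convex} sets (or at least a good-cover condition), and it is not obvious that a 1-cycle — a closed connected path of edges — or its description is convex; the same gap is already glossed over in Lemma~\ref{lemma:homotopyType}, which simply asserts that 1-cycles ``are closed, convex sets in Euclidean space.'' I would therefore either inherit that assumption directly from Lemma~\ref{lemma:homotopyType} (treating the 1-cycles, or the regions they bound, as the relevant convex sets and noting that $\Phi$ preserves this), or, if one wants to be careful, replace ``convex'' by the weaker requirement that the family $\Phi(\mathcal{H}_1)$ forms a good cover so that the nerve lemma still applies. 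A secondary, milder point is to confirm that the description map behaves well enough — continuity, or at least that it sends the good-cover structure of $\mathcal{H}_1$ to a good-cover structure of $\Phi(\mathcal{H}_1)$ — so that intersections and their homotopy types are faithfully transported; given the paper's standing conventions on $\Phi$ this should be routine.
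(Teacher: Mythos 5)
Your overall strategy---identify $\Phi(\Nrv \mathcal{H}_1)$ with the Edelsbrunner--Harer nerve of the finite family $\Phi(\mathcal{H}_1)$ and then apply Theorem~\ref{EHnerve}---is the same route the paper takes, but the two arguments diverge on the key hypothesis, and your version of that step contains a genuine gap. The paper's proof does not regard $\Phi(\cyc A)$ as the image of a 1-cycle at all: it reads each member of $\Phi(\mathcal{H}_1)$ as a single feature vector, i.e.\ a point of $\mathbb{R}^n$, hence a closed convex singleton, and then cites Lemma~\ref{lemma:homotopyType}. You instead justify the convexity hypothesis by asserting that the image of a closed, convex 1-cycle under a continuous description map is again a closed, convex subset of Euclidean space. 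That assertion fails on both counts: a 1-cycle---a simple closed path of edges---is not a convex set, and continuous maps do not preserve convexity (only affine maps do); compactness gives closedness of the image, but nothing gives convexity. So, as written, the justification you offer for applying Theorem~\ref{EHnerve} to the family $\Phi(\mathcal{H}_1)$ is not valid.

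To your credit, you flag exactly this as the main obstacle, and your proposed repairs are the right ones: either adopt the reading the paper actually uses (each $\Phi(\cyc A)$ is a single point of $\mathbb{R}^n$, so the family is trivially one of closed convex sets, although then nonempty descriptive intersection amounts to exact coincidence of descriptions), or weaken ``convex'' to a good-cover hypothesis on $\Phi(\mathcal{H}_1)$ so that a nerve lemma still applies. You are also right that the convexity defect is already present in the paper's Lemma~\ref{lemma:homotopyType}, which simply asserts that 1-cycles are closed convex sets; neither your argument nor the paper's resolves that, so strictly speaking both proofs rest on an unproved (and in general false) convexity premise unless one of these repairs is carried out explicitly.
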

\begin{proof}$\mbox{}$\\
\noindent 
Each member of $\Phi(\mathcal{H}_1)$ is feature vector in $\mathbb{R}^n$ and each point in $\mathbb{R}^n$ is a closed, convex singleton set.   Then from Lemma~\ref{lemma:homotopyType}, $\Phi(\Nrv \mathcal{H}_1)$ and  $\mathop{\bigcup}\limits_{\Phi(\cyc A)\in \Phi(\Nrv \mathcal{H}_1)}\Phi(\cyc A)$ have same homotopy type.
\end{proof}

\begin{remark}{\bf Open Problems}.\\
Let $\sh A$ be a finite, bounded planar shape covered with a simplicial complex $K$ and let $H_1(K)$ be a homology group on $K$.\\
An open problem in shape theory is selecting each 1-cycle that is the contour of a subshape containing a hole in $\sh A$.

A second open problem in shape theory is the construction of a collection of homology nerves that overlap a subshape of interest in a shape $sh A$.

Let  $\mathcal{H}_1(K)$ be a collection of homology groups on a simplicial complex $K$.   A third open problem in shape theory is detecting space curves (also called twisted curves by D. Hilbert and S. Cohn-Vossen~\cite[\S 27]{Hilbert1932ChelseaSpaceCurves}) overlapping with 1-cycles in $\mathcal{H}_1(K)$.  

A fourth open problem in shape theory is to use homology nerves as a basis for measuring the persistence over time of object shapes in digital images.  

A fifth open problem in shape theory is to measure the persistence of a finite, bounded shape over time using a shape signature that includes the uniform iso-curvature of the 1-cycles and the Betti number of a homology group on the shape.  
\qquad \textcolor{blue}{\Squaresteel}
\end{remark}
   
  
\bibliographystyle{amsplain}
\bibliography{NSrefs}

\end{document}